\DeclareMathOperator*{\argmin}{argmin}
\DeclareSIUnit \VAr {VAr} 
\DeclareSIUnit \VA {VA} 
\DeclareSIUnit \rad {Radians} 
\newcommand{\reals}{{\mbox{\bf R}}}
\newcommand{\dom}{{\mbox{\bf dom}}}
\newcommand{\epi}{{\mbox{\bf epi}}}
\algrenewcommand\alglinenumber[1]{\scriptsize #1:}
\newcommand{\algrule}[1][.2pt]{\par\vskip.5\baselineskip\hrule height #1\par\vskip.5\baselineskip}
\algrenewcommand\algorithmicindent{2.0em}%
\DeclarePairedDelimiter{\nint}\lfloor\rceil
\newtheorem{theorem}{Theorem}
\let\olddefinition\definition
\renewcommand{\definition}{\olddefinition\normalfont}
\begin{document}

\title{Iterative LP-based Methods for the Multiperiod Optimal Electricity and Gas Flow Problem}

\author{\IEEEauthorblockN{Sleiman~Mhanna,~\emph{Member, IEEE}, Isam Saedi,~\emph{Student~Member, IEEE},
		Pierluigi Mancarella,~\emph{Senior~Member, IEEE}}}

\maketitle

\begin{abstract}
	In light of the increasing coupling between electricity and gas networks, this paper introduces two novel iterative methods for efficiently solving the multiperiod optimal electricity and gas flow (MOEGF) problem. The first is an iterative MILP-based method and the second is an iterative LP-based method with an elaborate procedure for ensuring an integral solution. The convergence of the two approaches is founded on two key features. The first is a penalty term with a single, \emph{automatically tuned}, parameter for controlling the step size of the gas network iterates. The second is a sequence of supporting hyperplanes together with an increasing number of carefully constructed halfspaces for controlling the convergence of the electricity network iterates. Moreover, the two proposed algorithms use as a warm start the solution from a novel polyhedral relaxation of the MOEGF problem, for a noticeable improvement in computation time as compared to a cold start. Unlike the first method, which invokes a branch-and-bound algorithm to find an integral solution, the second method implements an elaborate \emph{steering} procedure that guides the continuous variables to take integral values at the solution. Numerical evaluation demonstrates that the two proposed methods can converge to high-quality \emph{feasible} solutions in computation times at least \emph{two orders of magnitude faster} than both a state-of-the-art nonlinear branch-and-bound (NLBB) MINLP solver and a mixed-integer convex programming (MICP) relaxation of the MOEGF problem. The experimental setup consists of five test cases, three of which involve the \emph{real} electricity and gas transmission networks of the state of Victoria with \emph{actual} linepack and demand profiles.
\end{abstract}
\begin{IEEEkeywords}
	Sequential linear programming, polyhedral envelopes, integrated electricity and gas systems, piecewise linear approximations, mixed-integer nonlinear programming, mixed-integer second-order cone programming, branch-and-bound.
\end{IEEEkeywords}

\IEEEpeerreviewmaketitle

\setlength{\belowdisplayskip}{0.5pt} \setlength{\belowdisplayshortskip}{0.5pt}
\setlength{\abovedisplayskip}{0.5pt} \setlength{\abovedisplayshortskip}{0.5pt}
\section*{Notation}
\addcontentsline{toc}{section}{Notation}
\subsection{Sets}
	\begin{IEEEdescription}[\IEEEsetlabelwidth{$\phi_{mn}^{{\rm in/o},t}$}\IEEEusemathlabelsep]
		\item[$\mathcal{B}$] Set of buses in the electricity network.
		\item[$\mathcal{B}_{i}$] Set of buses adjacent to bus $i$.
		\item[$\mathcal{C}$] Set of compressors in the gas network.
		\item[$\mathcal{D}^{\rm g}$] Set of gas-powered generators (GPGs) on the gas network.
		\item[$\mathcal{D}_{m}^{\rm g}$] Set of GPGs connected to node $m$.
		\item[$\mathcal{E}$] Set of non-pipe elements.
		\item[$\mathcal{G}^{\rm g/ng}$] Set of GPGs/non-GPGs in the electricity network.
		\item[$\mathcal{G}_{i}^{\rm g/ng}$] Set of GPGs/non-GPGs connected to bus $i$.
		\item[$\mathcal{L}/\mathcal{L}^{\rm t}$] Set of all branches $ij$/$ji$ where $i$/$j$ is the ``from''/``to'' bus.
		\item[$\mathcal{N}$] Set of nodes in the gas network.
		\item[$\mathcal{P}$] Set of pipelines in the gas network.
		\item[$\mathcal{R}$] Set of pressure regulators in the gas network.
		\item[$\mathcal{S}$] Set of gas supplies in the gas network.
		\item[$\mathcal{S}_{m}$] Set of gas supplies connected to node $m$.
		\item[$\mathcal{T}$] Decision time horizon.
	\end{IEEEdescription}
\subsection{Parameters and input data}
	\begin{IEEEdescription}[\IEEEsetlabelwidth{$\phi_{mn}^{{\rm in/o},t}$}\IEEEusemathlabelsep]
		\item[$c_{0,gi}$] Constant coefficient ($\SI{}{\$\per\hour}$) term of GPG/non-GPG $g$'s cost function.
		\item[$c_{1,gi}$] Coefficient ($\SI{}{\$\per\mega\watt\hour}$) of the linear term of GPG/non-GPG $g$'s cost function.
		\item[$c_{2,gi}$] Coefficient ($\SI{}{\$\per\mega\watt\hour\squared}$) of the quadratic term of GPG/non-GPG $g$'s cost function.
		\item[$c_{sm}$] Cost ($\SI{}{\$\per\meter\cubed}$) of gas production of gas supply $s$ at node $m$.
		\item[$\Delta \tau,\Delta t$] Time resolution ($\SI{}{\second},\SI{}{\hour}$).
		\item[$D_{mn}$] Diameter ($\SI{}{\meter}$) of pipeline $mn$.
		\item[$g^{\rm sh}_{i}$] Shunt conductance (pu) at bus $i$.
		\item[$\mathrm{i}$] Imaginary unit.
		\item[$k$] Iteration number.
		\item[$\eta_{gi}$] Efficiency of GPG $g$ at bus $i$.
		\item[$\phi_{m}^{{\rm d},t}$] Gas demand ($\SI{}{\meter\cubed\per\second}$) at node $m$.
		\item[$L_{mn}$] Length ($\SI{}{\meter}$) of pipeline $mn$.
		\item[$HHV$] Higher heating value ($\SI{38.07}{\mega\joule\per\meter\cubed}$) of natural gas.
		\item[$\nu_{mn}$] Gas consumption coefficient of compressor $mn$.
		\item[$p_{i}^{{\rm d},t}$] Active power demand ($\SI{}{\mega\watt}$) at bus $i$.
		\item[$p_{gi}^{\rm RD/RU}$] Ramp down/up rate ($\SI{}{\mega\watt}/\SI{}{\hour}$) of GPG/non-GPG $g$ at bus $i$.
		\item[$R$] Specific gas constant ($\SI{478.42}{\joule\per\kilogram\per\kelvin}$) at standard conditions.
		\item[$\rho$] Gas density ($\SI{0.735}{\kilogram\per\meter\cubed}$) at standard conditions.
		\item[$\overline{\gamma}_{mn}/\underline{\gamma}_{mn}$] Upper/Lower limit on the compression (pressure drop) ratio of a compressor (pressure regulator).
		\item[$S$] Specific gravity (relative density) of natural gas, $S = 0.6$ (dimensionless).
		\item[$T$] Gas temperature ($\SI{288.15}{\kelvin}$) at standard conditions.
		\item[$T_{ij}$] Complex tap ratio of a phase shifting transformer ($T_{ij}=\tau_{ij}\mathrm{e}^{\mathrm{i} \theta_{ij}^{\rm shift}}$).
		\item[$Y_{ij}$] Series admittance (pu) in the $\pi$-model of branch $ij$.
		\item[$Z_{mn}$] Compressibility factor (dimensionless) of gas in pipeline $mn$.
	\end{IEEEdescription}
\subsection{Operators}
	\begin{IEEEdescription}[\IEEEsetlabelwidth{$\phi_{mn}^{{\rm in/o},t}$}\IEEEusemathlabelsep]
		\item[$\bullet^*$] Conjugate operator.
		\item[$\Im/\Re\left\{\bullet\right\}$] Imaginary/Real value operator.
		\item[$\underline{\bullet}/\overline{\bullet}$] Minimum/Maximum magnitude operator.
		\item[$\left|\bullet\right|$] Magnitude operator/Cardinality of a set.
		\item[$\nint{\bullet}$] Round-to-nearest-integer operator.
		\item[$\times$] Cross product of two vectors.
	\end{IEEEdescription}
\subsection{Variables}
	\begin{IEEEdescription}[\IEEEsetlabelwidth{$\phi_{mn}^{{\rm in/o},t}$}\IEEEusemathlabelsep]
		\item[$p_{gi}^{t}$] Active power ($\SI{}{\mega\watt}$) generation of GPG/non-GPG $g$ at bus $i$.
		\item[$p_{ij}^{t}$] Active power ($\SI{}{\mega\watt}$) flow along branch $ij$.
		\item[$\theta_{i}^{t}$] Voltage angle ($\SI{}{\radian}$) at bus $i$.
		\item[$\wp_{m}^{t}$] Gas pressure ($\SI{}{\pascal}$) at node $m$.
		\item[$\tilde{\wp}_{mn}^{t}$] Average gas pressure ($\SI{}{\pascal}$) across pipeline $mn$.
		\item[$\phi_{mn}^{t}$] Gas flow rate ($\SI{}{\meter\cubed\per\second}$) across edge $mn$.
		\item[$\phi_{mn}^{{\rm in/out},t}$] Inflow/Outflow rate of gas ($\SI{}{\meter\cubed\per\second}$) of pipeline $mn$.
		\item[$\phi_{mn}^{{\rm c},t}$] Gas trapped in compressor $mn$ when it is boosting pressure.
		\item[$\phi_{mn}^{+,t}$] Gas flowing across compressor $mn$ when it is operating in the direction of pressure boost.
		\item[$\phi_{mn}^{-,t}$] Gas flowing across compressor $mn$ when it is operating in a reverse direction.
		\item[$\phi_{gm}^{t}$] Gas consumption ($\SI{}{\meter\cubed\per\second}$) of GPG $g$ at node $m$.
		\item[$\phi_{sm}^{t}$] Gas flow rate ($\SI{}{\meter\cubed\per\second}$) from gas supply $s$ at node $m$.
		\item[$\ell_{mn}^{t}$] Linepack ($\SI{}{\meter\cubed}$) of pipeline $mn$.
		\item[$z_{mn}^{t}$] Binary variables for non-pipe elements (also for pipelines in the MICP relaxation).	
	\end{IEEEdescription}

\section{Introduction}

	\IEEEPARstart{T}{raditionally}, energy systems like electricity and gas were modeled, operated, and managed separately. However, recent developments in power-to-gas (PtG) and co-generation technologies, and the heavy reliance on gas-powered generators (GPGs) to balance intermittent generation from renewable energy sources, have all prompted a paradigm shift towards \emph{jointly} modeling and operating electricity and gas systems \cite{Geidl2007_OPFofMultipleEnergyCarriers}. The main sources of complexity in the modeling of integrated electricity and gas systems (IEGS) are the alternating current (AC) power flow equations in the electricity transmission network, the equations describing the dynamic behavior of gas flow in pipelines, and the disjoint sets describing the operation of non-pipe elements such as compressors and pressure regulators in the gas transmission network. This modeling of IEGS results in an NP-hard MINLP problem that has a nonconvex continuous relaxation, which is extremely challenging to solve, even to local optimality, using current state-of-the-art MINLP technology.
	Therefore, tractable alternatives such as linear programming (LP) approximations \cite{Qiu2016_LPforCoPlanning,Luo2018_FullyLinearIEGS,Fang2019_LPforIEGS}, second-order cone programming (SOCP) relaxations \cite{Wang2018_ConvexOGPF,Chen2019_UCwithNG,Yang2020_TwoStageOEGF}, semidefinite programming (SDP) relaxations \cite{Manshadi2018_SDPforGas}, mixed-integer linear programming (MILP) approximations \cite{Correaposada2014_PWLmodelsinGas,CorreaPosada2015_SCUCwithDynamicGas,Zhang2016_HourlyDRinIEGS,He2017_RobustCooptimizationofIEGSwithADMM,Shao2017_MILPOPFinIEGS,Chen2018_OPGFwithlimitedcontrolactions,Zhang2018_MILPforIEGS,Wu2019_distributionallyRobustIEGS}, and mixed-integer second-order cone programming (MISOCP) relaxations \cite{BorrazSanchez2016_ConvexRelaxationsforGas,Wen2018_SynergisticEandGviaADMM,He2018_DecentralizedIEGSwithCones,Singh2019_MISOCPforOGF,Schwele2019_ConvexificationforLinepack,Yang2020_MICPwithEnvelopeTightening,Sayed2020_OEGFSequentialMISOCP,Yang2020_DRCCforOEGF}, are garnering considerable attention in the research community.
	
	
	However, unless realistic approximations and assumptions are involved, achieving tractability may come at the price of infeasibility.
	On the gas network side, one common oversimplification is to adopt a steady-state gas flow model instead of a dynamic gas flow model. In reality, the behavior of gas flow is characterized by much slower dynamics as compared to electric power flow, which means that gas demand and supply are not balanced instantaneously. These slow dynamics give rise to the \emph{linepack}, which is instrumental in assessing gas network flexibility \cite{Clegg2016_IntegratedEandGFlexAssess}.\footnote{The linepack is the volume of gas that can be stored in a pipeline.} In contrast, steady-state gas flow assumes that supply and demand are balanced instantaneously, which does not account for the effect of the linepack, and therefore leads to an overestimation of the gas injections from gas suppliers \cite{Yang2018_EffectofNGinRobustScheduling}.
	
	Another common unrealistic assumption is related to the direction of flow in pipes and non-pipe elements. The assumption of known direction of gas flow in pipe and non-pipe elements is conducive for two main reasons. First, it obviates the disjoint sets describing the operation of non-pipe elements. Second, it renders the equation of gas flow in a pipe easier to convexify or approximate \cite{Wang2018_ConvexOGPF,Chen2019_UCwithNG,Yang2020_TwoStageOEGF}. This assumption is valid under certain conditions where the demand predictably fluctuates within the day. However, in a multiperiod setting, this assumption would no longer be valid in the planning and operation of future IEGS under different clean-fuel (e.g., hydrogen, synthetic methane, etc.) injection and storage scenarios \cite{Clegg2015_IEGSwithPtG}, and different electrification versus decarbonization scenarios of the heating and transportation sectors \cite{Clegg2018_IEGSwithheatpartII,Saedi2020_ElectrificationofHeating}. Without any assumptions on the direction of flow in a pipe, there are two approaches to approximate or convexify the underlying nonconvex equality constraint. The first is through piecewise linear (PWL) techniques, which can be divided into two types. In the first type, each univariate term is approximated by PWL segments \cite{Correaposada2014_PWLmodelsinGas,CorreaPosada2015_SCUCwithDynamicGas,He2017_RobustCooptimizationofIEGSwithADMM,Chen2018_OPGFwithlimitedcontrolactions,Zhang2018_MILPforIEGS}. The second type consists of higher dimensional PWL techniques, such as the 2-D grid triangulation domains (in the 3-D Euclidean space) in \cite{Zhang2016_HourlyDRinIEGS}, and the 3-D PWL techniques which are based on the Taylor series expansion of the 3-variable equation of gas flow in pipelines \cite{Shao2017_MILPOPFinIEGS,Wu2019_distributionallyRobustIEGS}. The aim of these methods is to approximate the original MINLP problem by the more appealing MILP, as it is computationally more efficient and provides a measure of optimality. However, even for a modest approximation accuracy, these MILP approximations would require a computationally prohibitive number of PWL segments \cite{Huchette2017_PWL}.\footnote{This claim is also true for PWL methods employing special ordered sets of type 2 (SOS2). SOS2 constraints, which are ordered sets of variables where at most two variables in the set may take non-zero values, instruct the branch and bound algorithm to branch on sets of variables, rather than individual variables. Examples of the latter include PWL methods that use binary variables for each segment or a logarithmic number of binary variables. A numerical comparison of state-of-the-art PWL methods to the proposed algorithms in this paper is given in \cite{IEGS_data} as part of this work.} The second approach consists of relaxing the nonconvex constraint to an MISOCP one \cite{BorrazSanchez2016_ConvexRelaxationsforGas,Wen2018_SynergisticEandGviaADMM,Singh2019_MISOCPforOGF,Schwele2019_ConvexificationforLinepack,Yang2020_MICPwithEnvelopeTightening}. However, these methods can still result in an infeasible solution as the original nonconvex equality constraint is relaxed into two disjoint SOC inequality constraints. In other words, the MISOCP approach is only feasible if one of the two SOC constraints is active at the optimum. One way to recover a feasible solution is to implement the \emph{convex-concave} procedure \cite{Lipp2016_CCP}, which in this setting entails solving a series of MISOCP problems \cite{He2018_DecentralizedIEGSwithCones,Sayed2020_OEGFSequentialMISOCP,Yang2020_DRCCforOEGF}, but now at the expense of more computation time.
	
	On the electricity network side, common practice is to approximate the AC power flow model by a DC power flow model (see \cite{CongLiu2009_SCUCwithNGconstraints,CorreaPosada2015_SCUCwithDynamicGas,Alabdulwahab2017_StochasticSCIEGS,Wu2019_distributionallyRobustIEGS,Zhao2018_CoordinatedExpansionPlanningofIEGS,He2018_RobustCoPlanningwithReliability,Fang2019_LPforIEGS,He2018_DecentralizedIEGSwithCones,Wen2018_SynergisticEandGviaADMM,Zhang2016_HourlyDRinIEGS,He2017_RobustCooptimizationofIEGSwithADMM,Shao2017_MILPOPFinIEGS,Chen2018_OPGFwithlimitedcontrolactions,Zhang2018_MILPforIEGS,Wang2019_RiskBasedDistributionallyRobustOGPF,Schwele2019_ConvexificationforLinepack,Yang2018_EffectofNGinRobustScheduling}), as it is linear and therefore amenable to convex programming, MILP, and MISOCP. The main drawback of the DC power flow model is its inability to capture losses in transmission lines (and transformers), which may result in an oversimplification of the problem. One way to include these losses is to approximate the cosine terms in the AC power flow constraints by their second-order Maclaurin series \cite{Qiu2016_LPforCoPlanning,Luo2018_FullyLinearIEGS,Yang2020_TwoStageOEGF,Yang2020_MICPwithEnvelopeTightening}. 
	
	Against this background, this paper proposes two \emph{iterative} LP-based methods to directly solve the multiperiod optimal electricity and gas flow (MOEGF) problem with \emph{quasi-dynamic} gas constraints,\footnote{In a quasi-dynamic gas flow model, the continuity equation and the motion equation are discretized over the full length of the pipe, whereby the input flow and the output flow are different. In contrast, a steady-state model assumes constant flow across a pipe, i.e., the input and output flows are equal. As a consequence, the steady-state model fails to capture the linepack, which results in unrealistic gas supply profiles and pressures across the network. Interested readers are referred to \cite{CorreaPosada2015_IEGSandAdequacy} and \cite{Chaudry2008_MuliperiodGas&Electricity} for more details on the derivation of the quasi-dynamic gas flow model.} \emph{strengthened} DC power flow, bidirectional pipes, compressors, and pressure regulators. Without the procedures that address the integrality constraints, the two methods are similar in concept to sequential linear programming (SLP), which has been applied to the optimal power flow (OPF) problem in \cite{Castillo2016_SLPforOPF}, and the gas transmission problem in \cite{ONeill1979_MPforallocationofNG}, but has not been applied to the MOEGF problem with quasi-dynamic gas constraints and disjoint sets. The step size in \cite{Castillo2016_SLPforOPF} and \cite{ONeill1979_MPforallocationofNG} is controlled by tightening the bounds on selected variables at each iteration, which requires multiple parameters and problem-dependent tuning, and can result in infeasible LP subproblems. In contrast, in this work, the step size for the gas network iterates is controlled in the objective function by a penalty term with a single, automatically tuned, parameter. More interestingly, electricity network iterates are not controlled by a step size but by dynamically introducing affine and linear cuts as a sequence of supporting hyperplanes and an increasing number of carefully constructed supporting halfspaces. Relevant previous works involving LP consisted mainly of first-order Taylor series approximations around a fixed pre-determined operating point \cite{Qiu2016_LPforCoPlanning,Luo2018_FullyLinearIEGS,Fang2019_LPforIEGS}. However, although computationally efficient, those works do not yield feasible solutions.
	
	In a nutshell, this paper advances the state of the art in the following ways:
	\begin{itemize}
		\item It presents a novel iterative MILP-based method that is numerically demonstrated to converge to near-optimal and feasible solutions in at least one order of magnitude faster than both a state-of-the-art NLBB MINLP solver and a mixed-integer convex programming (MICP) relaxation of the problem.  
		\item It presents a novel fast iterative LP-based method that is numerically demonstrated to converge to high-quality feasible solutions in at least \emph{two orders of magnitude faster} than a state-of-the-art NLBB MINLP solver. Unlike the iterative MILP-based method which invokes a branch-and-bound algorithm to find an integral solution, this approach implements a fast iterative LP-based \emph{steering} procedure that guides the continuous variables to take integral values at the solution.
		\item It presents novel polyhedral envelopes for the direction of flow and average pressure in a pipeline and proves that the nonconvex equality constraint describing the average pressure in a pipeline becomes convex when relaxed into an inequality constraint.
		\item It numerically shows that the two proposed methods are robust against the choice of starting point but warm-starting them from the solution of the proposed polyhedral relaxation substantially improves computational speed. The proposed polyhedral relaxation also provides a valid starting point when no prior information is available.
	\end{itemize}
	
	While convex relaxations such as the SOCP, SDP, and the MISOCP are necessary to better understand the structure and complexity of the problem, they generally do not yield feasible solutions. Moreover, existing SOCP relaxations in \cite{Wang2018_ConvexOGPF,Chen2019_UCwithNG,Yang2020_TwoStageOEGF} can only be applied when the direction of flow in pipeline is known beforehand, and the existing SDP relaxation in \cite{Manshadi2018_SDPforGas} can only be applied in steady-state modeling where the square of the pressure terms can be substituted by linear ones. On the other hand, the MISOCP relaxation in the transient domain in \cite{BorrazSanchez2016_ConvexRelaxationsforGas,Wen2018_SynergisticEandGviaADMM,He2018_DecentralizedIEGSwithCones,Singh2019_MISOCPforOGF,Schwele2019_ConvexificationforLinepack,Yang2020_MICPwithEnvelopeTightening}, although able to model bi-directional gas flow in pipes, also yields infeasible solutions in general. This, along with the heavy computational burden of the MISOCP relaxation, are numerically confirmed in Section~\ref{Sec_NumericalEval} of this paper on real-world systems. Nonetheless, the MICP-relaxed problem provides a good-quality \emph{lower bound} than can be used to assess the optimality of the two proposed methods. In contrast, the two proposed method are not convex relaxations, but are rather founded on a computationally efficient SLP method that exploits the inherent engineering limits dictated by the design parameters of components, as well as the structure and other properties of the MOEGF problem with quasi-dynamic gas flow modeling, to obtain high-quality \emph{feasible} solutions. The validity of both the modeling and the solutions are demonstrated on the real Victorian gas transmission system with actual linepack and pressure profiles from the Australian Energy Market Operator (AEMO) \cite{AEMO}.
	
	The purpose and innovation of the proposed two iterative algorithms are two fold. First, they leverage the superior computational efficiency of state-of-the-art MILP and LP solvers, while retaining the accuracy of interior-point methods (IPM). Second, since the market dispatch engines of most Independent System Operators (ISO) around the world use MILP or LP solvers to obtain the locational marginal prices (LMP), by solving a problem that incorporates an LP approximation of the OPF problem, this approach is designed to allow ISO to retain their MILP or LP solvers but now with the added benefits of capturing the couplings between electricity and gas networks, gas network flexibility, and LMP that reflect transmission line losses, all in an integrated optimization framework.
	
	It is worth noting that current electricity and gas markets are still operated separately. However, extensive recent literature has shown that an integrated operation of both systems leads to notable operational cost savings and a better quantification of the flexibility of both systems \cite{Geidl2007_OPFofMultipleEnergyCarriers,CorreaPosada2015_IEGSandAdequacy,Chaudry2008_MuliperiodGas&Electricity,Ameli2019_CoordinatedOperationofGE,Clegg2016_IntegratedEandGFlexAssess}. In addition, the advent of power-to-gas technologies will further increase the interactions between the two networks, thereby reinforcing the need for an integrated modeling \cite{Clegg2015_IEGSwithPtG}. Nonetheless, the concepts underlying the proposed algorithms in this paper can still be straightforwardly applied to each market/system independently.
	
	The paper is organized as follows. Section \ref{Sec_MOEGF} describes the MOEGF problem and Section \ref{Sec_MICP} presents an MICP relaxation and an MISOCP relaxation of the MOEGF problem. Section \ref{Sec_PolyEnv} introduces the polyhedral envelopes of the nonconvex sets, and the resulting polyhedral relaxation of the MOEGF problem. The proposed iterative MILP-based method and the iterative LP-based method are introduced in Sections \ref{Sec_SMILP} and \ref{Sec_SLPH}, respectively. The optimality, feasbility, and computational effort of the two proposed methods are compared to those of a state-of-the-art NLBB solver and the MICP relaxation of the MOEGF problem in Section \ref{Sec_NumericalEval}, which also assesses the output linepack of the two proposed methods against historical ones from AEMO. The paper concludes in Section \ref{Sec_Conclusion}.

\section{Multiperiod Optimal electricity and gas flow}\label{Sec_MOEGF}

	The MOEGF problem consists of finding the least-cost dispatch of power from electric generators, and gas from gas supplies, to satisfy both electrical and gas demands at all buses in the electrical network and nodes in the gas network. Electric power flow is governed by physical laws such as Ohm's law and Kirchhoff's current law (KCL), and other technical restrictions. On the other hand, the flow of gas is governed by physical laws, such as the conservation of flow at each node, the \emph{quasi-dynamic} behavior of gas flow in pipelines, and other operational requirements such as valve switching in compressor and pressure regulation stations. The MOEGF can be mathematically formulated as
	\begin{align}
		\hspace{-1.25cm} \underset{\substack{p_{gi}^{t},p_{ij}^{t},\theta_{i}^{t},\wp_{m}^{t}\\\tilde{\wp}_{mn}^{t},\phi_{mn}^{t},\ell_{mn}^{t},\phi_{mn}^{{\rm in},t}\\\phi_{nm}^{{\rm out},t},z_{mn}^{t},\phi_{gm}^{t},\phi_{sm}^{t}\\\phi_{nm}^{{\rm c},t},\phi_{nm}^{+,t},\phi_{nm}^{-,t}}} 
		{\mbox{ minimize}} & \sum_{t \in \mathcal{T}} \Bigg( \sum_{gi \in \mathcal{G}} \hspace{-0cm} f_{gi}\left(p_{gi}^{t}\right)\Bigg. + \hspace{-0.2cm} \hspace{-0cm} & \hspace{-0cm} & \nonumber
	\end{align}
	\vspace{-1.0cm}
	\begin{subequations}\label{OEGF}
		\begin{align}
			& \hspace{0cm} \Bigg. \sum_{sm\in \mathcal{S}} f_{sm}\left(\phi_{sm}^{t}\right)\Bigg) \hspace{-3cm} & \hspace{-0cm} & \label{OEGF_objective} \\
			\text{subject to } \ \ \underline{p}_{gi} \leq p_{gi}^{t} & \leq \overline{p}_{gi}, \hspace{-1.5cm} & \hspace{-0cm} gi \in \mathcal{G} \label{OEGF_Pminmax} \\
			- p_{gi}^{\rm RD} \Delta t \le p_{gi}^{t} - p_{gi}^{t-1} & \leq p_{gi}^{\rm RU} \Delta t, \hspace{-1.5cm} & \ \ \quad \quad gi \in \mathcal{G} \label{OEGF_Pramp} \\
			-\overline{p}_{ij} \leq p_{ij}^{t} & \leq \overline{p}_{ij}, \hspace{-1.5cm} & ij \in \mathcal{L} \cup \mathcal{L}&^{\rm t} \label{OEGF_MWlimits} \\
			\theta_{ij}^{t} = \theta_{i}^{t}-\theta_{j}^{t}, \ \underline{\theta}_{ij} & \leq \theta_{ij}^{t} \leq \overline{\theta}_{ij}, \hspace{-1.5cm} & ij \in \mathcal{L} & \label{OEGF_anglediff} \\
			\sum_{g \in \mathcal{G}_{i}^{\rm g} \cup \mathcal{G}_{i}^{\rm ng}} p_{gi}^{t} = p_{i}^{{\rm d},t} & + \sum_{j \in \mathcal{B}_{i}} p_{ij}^{t} + g^{\rm sh}_{i}, \hspace{-1.5cm} & i \in \mathcal{B} & \label{OEGF_KCLp} \\
			p_{ij}^{t}= g_{ij} \hspace{0.5cm} & \hspace{-0.5cm} 0.5(\theta_{ij}^{t})^2 + b_{ij}\theta_{ij}^{t}, \hspace{-1.5cm} & ij \in \mathcal{L} & \label{OEGF_Pij} \\
			p_{ji}^{t}= g_{ji} \hspace{0.5cm} & \hspace{-0.5cm} 0.5(\theta_{ij}^{t})^2 - b_{ji}\theta_{ij}^{t}, \hspace{-1.5cm} & ji \in \mathcal{L}&^{\rm t} \hspace{-0.1cm} \label{OEGF_Pji} \\
			p_{gi}^{t}=\phi_{gm}^{t} HHV &\eta_{gi}, & \hspace{-3cm} gi \in \mathcal{G}^{\rm g}, \ gm \in \mathcal{D}&^{\rm g} \label{OEGF_gasflow_GFG}\\ 
			\underline{\phi}_{sm}^{\rm s} \leq \phi_{sm}^{t} & \leq \overline{\phi}_{sm}^{\rm s}, \hspace{-0.5cm} & sm \in \mathcal{S}& \label{OEGF_Fminmax} \\
			\underline{\wp}_{m} \leq \wp_{m}^{t} & \leq \overline{\wp}_{m} & m \in \mathcal{N} & \label{OEGF_PressureMinMax}\\
			&\hspace{-3.45cm} \sum_{s \in \mathcal{S}_{m}} \phi_{sm}^{t} = \sum_{mn \in \mathcal{P}} \phi_{mn}^{{\rm in},t} - \sum_{nm \in \mathcal{P}} \phi_{nm}^{{\rm out},t} + \hspace{-1.5cm} & & \nonumber \\ 
			&\hspace{-3.5cm} \sum_{mn \in \mathcal{E}} \phi_{mn}^{t} - \hspace{-0.1cm} \sum_{nm \in \mathcal{E}} \phi_{nm}^{t} + \hspace{-0.1cm} \sum_{g \in \mathcal{D}_{m}^{\rm g}} \phi_{gm}^{t} + \phi_{m}^{{\rm d},t} , \hspace{-1.5cm} & m \in \mathcal{N} & \label{OEGF_Nodal} \\
			&\hspace{-3cm} \phi_{mn}^{t}\left|\phi_{mn}^{t} \right| = \Phi_{mn}\left((\wp_{m}^{t})^2 - (\wp_{n}^{t})^2 \right), \hspace{-1.5cm} & mn \in \mathcal{P} & \label{OEGF_gasflow_pipes}\\
			\phi_{mn}^{t} = 0.5 & \left(\phi_{mn}^{{\rm in},t} + \phi_{mn}^{{\rm out},t} \right), \hspace{-1.5cm} & mn \in \mathcal{P} & \label{OEGF_averageflow}\\ 
			&\hspace{-3cm} \tilde{\wp}_{mn}^{t} = \frac{2}{3} \left(\wp_{m}^{t} + \wp_{n}^{t} - \frac{\wp_{m}^{t}\wp_{n}^{t}}{\wp_{m}^{t} + \wp_{n}^{t}} \right), \hspace{-1.5cm} & mn \in \mathcal{P} & \label{OEGF_averagepressure} \\
			\ell_{mn}^{t}=&\Psi_{mn}\tilde{\wp}_{mn}^{t}, & mn \in \mathcal{P} & \label{OEGF_linepack1} \\
			\ell_{mn}^{t}=\ell_{mn}^{t-1} + \Delta \tau & \left( \phi_{mn}^{{\rm in},t} - \phi_{mn}^{{\rm out},t} \right), \hspace{-1.5cm} & mn \in \mathcal{P} & \label{OEGF_continuity} \\
			-\overline{\phi}_{mn} \leq \phi_{mn}^{t},\phi_{mn}^{{\rm in},t}&,\phi_{mn}^{{\rm out},t} \leq \overline{\phi}_{mn}, \hspace{-1.5cm} & mn \in \mathcal{P} & \label{OEGF_FlowPipeMinMax}\\
			&\hspace{-1.5cm} \phi_{mn}^{t} = \phi_{mn}^{+,t} + \phi_{mn}^{-,t}, \hspace{-1.5cm} & mn \in \mathcal{C} & \label{OEGF_compflowplusminus} \\
			&\hspace{-1.5cm} \phi_{mn}^{{\rm c},t} = \nu_{mn} \phi_{mn}^{+,t} , \hspace{-1.5cm} & mn \in \mathcal{C} & \label{OEGF_compgastrap} \\
			&\hspace{-2cm} 0 \le \phi_{mn}^{+,t} \le z_{mn}^{t}\overline{\phi}_{mn} , \hspace{-1.5cm} & mn \in \mathcal{C} & \label{OEGF_compplus} \\
			&\hspace{-2.5cm} -\left(1-z_{mn}^{t}\right)\overline{\phi}_{mn} \le \phi_{mn}^{-,t} \le 0 , \hspace{-1.5cm} & mn \in \mathcal{C} & \label{OEGF_compminus} \\
			&\hspace{-3.5cm} -\left(1-z_{mn}^{t}\right)\overline{\phi}_{mn} \leq \phi_{mn}^{t} \leq z_{mn}^{t}\overline{\phi}_{mn}, \hspace{-1.5cm} & mn \in \mathcal{E} & \label{OEGF_compflow} \\
			&\hspace{-3.5cm} \left(\overline{\wp}_{n} - \underline{\wp}_{m} \overline{\gamma}_{mn}\right)\left(z_{mn}^{t} - 1\right) + \wp_{n}^{t} \le \wp_{m}^{t} \overline{\gamma}_{mn}, \hspace{-1cm} & mn \in \mathcal{E} & \label{OEGF_compboost1} \\
			&\hspace{-3.5cm} \left(\overline{\wp}_{m}\underline{\gamma}_{mn} - \underline{\wp}_{n} \right)\left(z_{mn}^{t} - 1\right) + \wp_{m}^{t}\underline{\gamma}_{mn} \le \wp_{n}^{t}, \hspace{-1cm} & mn \in \mathcal{E} & \label{OEGF_compboost2} \\
			&\hspace{-2.5cm} \wp_{n}^{t} - \wp_{m}^{t} \leq z_{mn}^{t}\left( \overline{\wp}_{n} - \underline{\wp}_{m} \right), \hspace{-1.5cm} & mn \in \mathcal{E} & \label{OEGF_compnoboost1} \\
			&\hspace{-2.5cm} \wp_{m}^{t} - \wp_{n}^{t} \leq z_{mn}^{t}\left(\overline{\wp}_{m} - \underline{\wp}_{n} \right), \hspace{-1.5cm} & mn \in \mathcal{E} & \label{OEGF_compnoboost2} \\
			&\hspace{-1.5cm} z_{mn}^{t} \in \left\lbrace 0 , 1 \right\rbrace, & \hspace{-3cm} mn \in \mathcal{E} & \label{OEGF_binary}
		\end{align}
	\end{subequations}
	for $t \in \mathcal{T}=\{t_{0},t_{0}+\Delta t,\ldots,t_{0} + \Delta t \left( H - 1 \right) \}$, where $\Delta \tau = 3600 \Delta t$, and $\mathcal{G} = \mathcal{G}^{\rm g} \cup \mathcal{G}^{\rm ng}$. The cost functions of electrical generators in the objective function in \eqref{OEGF_objective} are assumed to be quadratic, and of the form $f_{gi}\left(p_{gi}^{t}\right)=c_{2,gi}\left(p_{gi}^{t}\Delta t \right)^2 + c_{1,gi}\left(p_{gi}^{t}\Delta t \right) + c_{0,gi} \Delta t$. The cost functions of the gas supplies are assumed to be linear, and of the form $f_{sm}\left(\phi_{sm}^{t}\right) = c_{sm}\phi_{sm}^{t} \Delta \tau$. Electrical network constraints are delineated by \cref{OEGF_Pminmax,OEGF_Pramp,OEGF_MWlimits,OEGF_anglediff,OEGF_KCLp,OEGF_Pij,OEGF_Pji}, where \cref{OEGF_Pminmax} and \cref{OEGF_Pramp} capture the limits on active power generation and the generator ramp rates, respectively, whereas \eqref{OEGF_MWlimits} and \eqref{OEGF_anglediff} capture the limits on branch active power and angle difference, respectively. KCL is represented by \cref{OEGF_KCLp} and the strengthened DC power flow constraints are delineated by \cref{OEGF_Pij} and \cref{OEGF_Pji}, where $g_{ij}=\Re\left\{Y_{ij}^*/T_{ij}\right\}$, $b_{ij}=\Im\left\{Y_{ij}^*/T_{ij}\right\}$, $g_{ji}=\Re\left\{Y_{ji}^* / T_{ji}^*\right\}$, and $b_{ji}=\Im\left\{Y_{ji}^* / T_{ji}^*\right\}$. The strengthened DC OPF formulation is intended to approximate transmission line losses by replacing the $\cos\left( \theta_{ij} \right)$ terms in the original AC constraints by their second-order Maclaurin series; i.e., $\cos\left( \theta_{ij} \right) \approx 1 - 0.5\theta_{ij}^2 $, and is therefore a better approximation compared to its vanilla DC OPF counterpart. Gas system constraints are delineated by \cref{OEGF_Fminmax,OEGF_PressureMinMax,OEGF_Nodal,OEGF_gasflow_pipes,OEGF_averageflow,OEGF_averagepressure,OEGF_linepack1,OEGF_continuity,OEGF_FlowPipeMinMax,OEGF_compflowplusminus,OEGF_compgastrap,OEGF_compplus,OEGF_compminus,OEGF_compflow,OEGF_compboost1,OEGF_compboost2,OEGF_compnoboost1,OEGF_compnoboost2,OEGF_binary}, where \cref{OEGF_Fminmax} delineates capacity limits of gas supplies, \eqref{OEGF_PressureMinMax} captures the nodal pressure limits, \eqref{OEGF_Nodal} is the gas flow nodal balance, \cref{OEGF_gasflow_pipes,OEGF_averageflow,OEGF_averagepressure,OEGF_linepack1,OEGF_continuity,OEGF_FlowPipeMinMax} describe the \emph{quasi-dynamic} behavior of gas flow in a pipe, and \cref{OEGF_compflowplusminus,OEGF_compgastrap,OEGF_compplus,OEGF_compminus,OEGF_compflow,OEGF_compboost1,OEGF_compboost2,OEGF_compnoboost1,OEGF_compnoboost2,OEGF_binary} describe the operation of non-pipe elements, i.e., compressors and pressure regulators. In more detail, \eqref{OEGF_gasflow_pipes} is the discretized equation of \emph{motion} along the full length of the pipe \cite{Osiadacz1987_Simulationofgasnetworks}, where
	\begin{align*}
		\Phi_{mn} = \frac{\pi^2 D_{mn}^5}{16 \rho^2 Z_{mn} R T L_{mn} f_{mn}},
	\end{align*}
	and $f_{mn} = 4\left(20.621D_{mn}^{1/6} \right)^{-2}$ defines the \emph{Weymouth} friction factor \cite{Menon2005_PipelineHydrolics}. In this work, the compressibility factor is defined as
	\begin{multline*}
		\hspace{-0.3cm} Z_{mn} \hspace{-0.1cm} = \hspace{-0.1cm} \left( 1 + \frac{49.9511 \left( 10^{1.785 S}\right) \left(\tilde{\wp}_{m} + \tilde{\wp}_{n} - \frac{\tilde{\wp}_{m}\tilde{\wp}_{n}}{\tilde{\wp}_{m} + \tilde{\wp}_{n}} \right) }{\frac{3\left(1.8 T \right)^{3.825}}{2} }\right)^{-1} \hspace{-0.5cm},
	\end{multline*}
	where $\tilde{\wp}_{m} = 0.5 \left(\underline{\wp}_{m} + \overline{\wp}_{m} \right) $, as opposed to the gross simplification of assuming a constant compressibility factor across the whole network. The average flow and pressure across a pipe are captured by \eqref{OEGF_averageflow} and \eqref{OEGF_averagepressure}, respectively. Constraint \eqref{OEGF_linepack1} is the \emph{linepack} equation with
	\begin{align*}
		\Psi_{mn} = \frac{\pi D_{mn}^2 L_{mn}}{4\rho Z_{mn} R T},
	\end{align*} 
	and \eqref{OEGF_continuity} is the discretized \emph{continuity} equation over the full length of the pipe. The limits on the gas flowing in a pipeline are captured by \eqref{OEGF_FlowPipeMinMax}. The operation of bidirectional compressors and pressure regulators is captured by \cref{OEGF_compplus,OEGF_compminus,OEGF_compflow,OEGF_compboost1,OEGF_compboost2,OEGF_compnoboost1,OEGF_compnoboost2}, where $z_{mn}^{t} = 1$ if the flow is in the direction of pressure boost (drop) for a compressor (pressure regulator), and $z_{mn}^{t} = 0$ otherwise. Note that $\underline{\gamma}_{mn} \geq 1$ for a compressor and $\overline{\gamma}_{mn} \leq 1$ for a pressure regulator. Similarly, $\overline{\gamma}_{mn} > 1$ for a compressor and $0 < \underline{\gamma}_{mn} < 1$ for a pressure regulator. The gas trapped in a compressor ($\phi_{mn}^{{\rm c},t}$) when it is boosting pressure is captured by \eqref{OEGF_compgastrap}, where parameter $\nu_{mn}$ encapsulates the percentage of $\phi_{mn}^{+,t}$ that is trapped in the compressor. It is identified in \cite{BorrazSanchez2009_ImprovingPipelinesCyclic} that the gas trapped by a compressor typically ranges between 3\% to 5\% of the gas flowing across it. The two systems are coupled by \eqref{OEGF_gasflow_GFG}, which assumes a linear relationship between the power output of a GPG and its input gas consumption.
	
	The nonconvexity of the problem stems from constraints \cref{OEGF_Pij,OEGF_Pji}, \cref{OEGF_gasflow_pipes,OEGF_averagepressure}, and the disjoint sets in \cref{OEGF_compplus,OEGF_compminus,OEGF_compflow,OEGF_compboost1,OEGF_compboost2,OEGF_compnoboost1,OEGF_compnoboost2}.
	
	Problem~\ref{OEGF} can be written in the general form 
	\begin{subequations}\label{OEGF_Generalproblem}
		\begin{align}
			\underset {\substack{x}} 
			{\mbox{ minimize}} \quad f_{0}\left(x\right) & & \label{OEGF_Generalproblem_objective}\\
			\text{ subject to} \quad h_{i}\left(x\right) & = 0, & & i=1,\ldots,p_{\rm e} \label{OEGF_Generalproblem_eq_elec}\\
			h_{i}\left(x\right) & = 0, & & i=p_{\rm e}+1,\ldots,p \label{OEGF_Generalproblem_eq_gas}\\
			a_{i}^{T}x & \leq b_{i}, & & i=1,\ldots,o \label{OEGF_Generalproblem_linear} \\
			x_{i} & \in \left\lbrace 0,1 \right\rbrace, & & i=1,\ldots,r \label{OEGF_Generalproblem_integrality}
		\end{align}
	\end{subequations}
	where $x \in \reals^{n}$, $f_{0}$,$h_{1},\ldots,h_{p}: \reals^{n} \rightarrow \reals$, $a_{1},\ldots,a_{o} \in \reals^{n}$, $b_{1},\ldots,b_{o} \in \reals$, and $r < n$.\footnote{Recall that an affine equality constraint of the form $a_{i}^{T}x = b_{i}$ can be rewritten as $\left\lbrace a_{i}^{T}x \leq b_{i}\right\rbrace \cap \left\lbrace - a_{i}^{T}x \leq -b_{i} \right\rbrace $.} Function $f_{0}\left(x\right)$ represents the cost functions in \eqref{OEGF_objective}, whereas \eqref{OEGF_Generalproblem_eq_elec} represent \cref{OEGF_Pij} and \cref{OEGF_Pji}, and \eqref{OEGF_Generalproblem_eq_gas} represent \cref{OEGF_gasflow_pipes} and \cref{OEGF_averagepressure}. Equality constraints \cref{OEGF_Pij}, \cref{OEGF_Pji}, \cref{OEGF_gasflow_pipes}, and \cref{OEGF_averagepressure} are nonlinear and therefore nonconvex. The linear constraints in \eqref{OEGF_Generalproblem_linear} represent \cref{OEGF_Pminmax,OEGF_Pramp,OEGF_MWlimits,OEGF_anglediff,OEGF_KCLp}, \cref{OEGF_gasflow_GFG,OEGF_Fminmax,OEGF_PressureMinMax,OEGF_Nodal}, \cref{OEGF_averageflow}, and \cref{OEGF_linepack1,OEGF_continuity,OEGF_FlowPipeMinMax,OEGF_compflowplusminus,OEGF_compgastrap,OEGF_compplus,OEGF_compminus,OEGF_compflow,OEGF_compboost1,OEGF_compboost2,OEGF_compnoboost1,OEGF_compnoboost2}, whereas \eqref{OEGF_Generalproblem_integrality} represents integrality constraints \eqref{OEGF_binary}. Problem~\ref{OEGF} is an MINLP problem with a nonconvex continuous relaxation,\footnote{A continuous relaxation is obtained by relaxing the integrality constraints $z \in \left\lbrace 0,1 \right\rbrace$ into box constraints $z \in \left[ 0,1 \right] $.} which, compared to convex MINLP problems, is more challenging to solve using current state-of-the-art MINLP technology.
	This work therefore introduces (i) an iterative MILP-based method that leverages the computational efficiency of MILP solvers and the accuracy of IPM, and (ii) a fast iterative LP-based method that can be used in situations where only LP solvers are available. The optimality of the solution is measured by comparing it to the one obtained from the MICP relaxation described in the next section.

\section{Mixed-integer convex relaxation}\label{Sec_MICP}
	An MICP relaxation of Problem~\ref{OEGF} can be obtained by convexifying constraints \cref{OEGF_Pij,OEGF_Pji}, \cref{OEGF_gasflow_pipes}, and \cref{OEGF_averagepressure}. Quadratic equality constraints \cref{OEGF_Pij} and \cref{OEGF_Pji} can be straightforwardly convexified by relaxing them into inequality constraints of the form
	\begin{align}
		p_{ij}^{t} \ge g_{ij} \hspace{0.5cm} & \hspace{-0.5cm} 0.5(\theta_{ij}^{t})^2 + b_{ij}\theta_{ij}^{t}, \hspace{-1.5cm} & ij \in \mathcal{L} & \label{OEGF_MISOCP_Pij} \\
		p_{ji}^{t} \ge g_{ji} \hspace{0.5cm} & \hspace{-0.5cm} 0.5(\theta_{ij}^{t})^2 - b_{ji}\theta_{ij}^{t}, \hspace{-1.5cm} & ji \in \mathcal{L}&^{\rm t}. \hspace{-0.1cm} \label{OEGF_MISOCP_Pji}
	\end{align}
	Constraints \cref{OEGF_gasflow_pipes} can be equivalently rewritten as
	\begin{align}
		\left( \phi_{mn}^{t}\right)^2 = & \Phi_{mn} \left( 2z_{mn}^{t}-1 \right) \left((\wp_{m}^{t})^2 - (\wp_{n}^{t})^2 \right), \hspace{-0.2cm} & mn \in \mathcal{P} & \label{OEGF_gasflow_pipes_alt} \\
		-\overline{\phi}_{mn} & \left( 1 - z_{mn}^{t}\right) \le \phi_{mn} \le z_{mn}^{t} \overline{\phi}_{mn}, & mn \in \mathcal{P} & \label{OEGF_MISOCP_FlowPipeMinMax} \\
		\wp_{m}^{t} & - \wp_{n}^{t} \le z_{mn}^{t}\left( \overline{\wp}_{m} - \underline{\wp}_{n} \right) , & mn \in \mathcal{P} & \label{OEGF_MISOCP_PressureMax} \\
		\wp_{m}^{t} & - \wp_{n}^{t} \ge \left( 1 - z_{mn}^{t}\right) \left( \underline{\wp}_{m} - \overline{\wp}_{n}\right) , & mn \in \mathcal{P} & \label{OEGF_MISOCP_PressureMin} \\
		& z_{mn}^{t} \in \left\lbrace 0 , 1 \right\rbrace, & \hspace{-3cm} mn \in \mathcal{P} & \label{OEGF_MISOCP_binary}
	\end{align}
	where $z_{mn}^{t}$ is a binary variable that takes the value of 1 when the gas is flowing from node $m$ to node $n$ and the value of 0 when the gas is flowing from node $n$ to node $m$. Next, \eqref{OEGF_gasflow_pipes_alt} can be transformed into an SOC constraint by introducing two new variables, $\xi_{mn}^{t}$ and $\zeta_{mn}^{t}$, and constraints of the form
	\begin{align}
		\left( \phi_{mn}^{t}\right)^2 = & \Phi_{mn} \left((\xi_{mn}^{t})^2 - (\zeta_{mn}^{t})^2 \right), & mn \in \mathcal{P} & \label{OEGF_MISOCP_gasflow_pipes}\\
		\xi_{mn}^{t} - \wp_{m}^{t} & \le \left( 1 - z_{mn}^{t}\right) \left( \overline{\wp}_{n} - \underline{\wp}_{m} \right), & mn \in \mathcal{P} & \label{OEGF_MISOCP_xim_max} \\
		\xi_{mn}^{t} - \wp_{m}^{t} & \ge \left( 1 - z_{mn}^{t}\right) \left( \underline{\wp}_{n} - \overline{\wp}_{m} \right), & mn \in \mathcal{P} & \label{OEGF_MISOCP_xim_min} \\
		\xi_{mn}^{t} - \wp_{n}^{t} & \le z_{mn}^{t} \left( \overline{\wp}_{m} - \underline{\wp}_{n} \right), & mn \in \mathcal{P} & \label{OEGF_MISOCP_xin_max} \\
		\xi_{mn}^{t} - \wp_{n}^{t} & \ge z_{mn}^{t} \left( \underline{\wp}_{m} - \overline{\wp}_{n} \right), & mn \in \mathcal{P} & \label{OEGF_MISOCP_xin_min} \\
		\zeta_{mn}^{t} - \wp_{n}^{t} & \le \left( 1 - z_{mn}^{t}\right) \left( \overline{\wp}_{m} - \underline{\wp}_{n} \right), & mn \in \mathcal{P} & \label{OEGF_MISOCP_zetan_max} \\
		\zeta_{mn}^{t} - \wp_{n}^{t} & \ge \left( 1 - z_{mn}^{t}\right) \left( \underline{\wp}_{m} - \overline{\wp}_{n} \right), & mn \in \mathcal{P} & \label{OEGF_MISOCP_zetan_min} \\
		\zeta_{mn}^{t} - \wp_{m}^{t} & \le z_{mn}^{t} \left( \overline{\wp}_{n} - \underline{\wp}_{m} \right), & mn \in \mathcal{P} & \label{OEGF_MISOCP_zetam_max} \\
		\zeta_{mn}^{t} - \wp_{m}^{t} & \ge z_{mn}^{t} \left( \underline{\wp}_{n} - \overline{\wp}_{m} \right). & mn \in \mathcal{P} & \label{OEGF_MISOCP_zetam_min}
	\end{align}
	An MISOCP relaxation of \eqref{OEGF_gasflow_pipes} can now be obtained by relaxing \eqref{OEGF_MISOCP_gasflow_pipes} into the convex SOC constraint
	\begin{align}
		\left( \xi_{mn}^{t} \right)^2 \ge & \left( \zeta_{mn}^{t} \right)^2 + \left( \phi_{mn}^{t}\right)^2/\Phi_{mn} . & mn \in \mathcal{P} & \label{OEGF_MISOCP_gasflow_pipes_r}
	\end{align}

	More interestingly, and perhaps less obvious at first glance, because the nodal pressures are strictly positive, constraint \cref{OEGF_averagepressure} becomes convex when it is relaxed into an inequality constraint of the form
	\begin{align}
		& \tilde{\wp}_{mn}^{t} \ge \frac{2}{3} \left(\wp_{m}^{t} + \wp_{n}^{t} - \frac{\wp_{m}^{t}\wp_{n}^{t}}{\wp_{m}^{t} + \wp_{n}^{t}} \right). & mn \in \mathcal{P} & \label{OEGF_averagepressure_r}
	\end{align}
	\begin{theorem}\label{Averagepressure_convex}
		The function $f(x,y) = - xy/(x+y)$ with $\dom f = \reals_{++}^{2} = \left\lbrace (x,y) \in \reals^{2}| x > 0, y > 0 \right\rbrace $ is a convex function, which entails that its epigraph $\epi f = \left\{ (x,y,z) | (x,y) \in \dom f, f(x,y) \le z \right\}$ is a convex set.
	\end{theorem}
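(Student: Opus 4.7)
The plan is to verify convexity of $f(x,y) = -xy/(x+y)$ on $\dom f = \reals_{++}^{2}$, from which convexity of $\epi f$ follows immediately from the standard equivalence between convexity of a function and convexity of its epigraph. I would present two complementary routes and pick the one the authors prefer as the main argument.

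The most direct route is via the Hessian. A short computation gives $\partial f/\partial x = -y^{2}/(x+y)^{2}$ and, by symmetry, $\partial f/\partial y = -x^{2}/(x+y)^{2}$, which yields
\begin{align*}
	\nabla^{2} f(x,y) = \frac{2}{(x+y)^{3}} \begin{pmatrix} y^{2} & -xy \\ -xy & x^{2} \end{pmatrix}.
\end{align*}
Since $x+y>0$ on the strictly positive orthant, the scalar prefactor is positive. The inner $2\times 2$ matrix has trace $x^{2}+y^{2}>0$ and determinant $x^{2}y^{2}-(xy)^{2}=0$, so its eigenvalues are $x^{2}+y^{2}$ and $0$, both nonnegative. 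Hence $\nabla^{2} f(x,y) \succeq 0$ throughout the convex domain $\reals_{++}^{2}$, which establishes convexity of $f$.

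A structural second route, which also gives useful intuition for why the relaxation in \eqref{OEGF_averagepressure_r} is indeed convex, is to recognize $f$ as a perspective. Writing
\begin{align*}
	f(x,y) = y\, g(x/y), \qquad g(u) = -\frac{u}{1+u} = -1 + \frac{1}{1+u},
\end{align*}
one sees that $g$ is convex on $(0,\infty)$ because $u\mapsto 1/(1+u)$ has second derivative $2/(1+u)^{3}>0$, and the perspective of a convex function is convex on $\{(x,y):y>0\}$, which reproduces the conclusion.

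The only mild subtlety, and the step that most deserves care, is the restriction to the open convex domain $\reals_{++}^{2}$: at points with $x+y=0$ the expression is undefined, and the perspective construction needs $y>0$ for the resulting bivariate function to inherit convexity. Once this domain issue is properly stated, both routes apply verbatim, and the epigraph statement follows with no additional work because $\dom f = \reals_{++}^{2}$ is itself convex.
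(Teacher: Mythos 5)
Your primary argument is essentially the paper's own proof: both compute the same Hessian $\nabla^{2}f(x,y)$ and establish positive semidefiniteness on $\reals_{++}^{2}$, the only difference being that the paper verifies $z^{T}\nabla^{2}f(x,y)z \ge 0$ directly (the quadratic form reduces to the perfect square $(z_{1}y - z_{2}x)^{2} \ge 0$), while you read off the eigenvalues of the inner matrix from its trace $x^{2}+y^{2}$ and determinant $0$; both verifications are correct and equally short. Your second route, recognizing $f(x,y) = y\,g(x/y)$ with $g(u) = -u/(1+u)$ convex on $(0,\infty)$ and invoking preservation of convexity under the perspective operation, is a genuinely different and arguably more structural argument that the paper does not give; it avoids the Hessian computation entirely and makes transparent why positivity of the pressures (i.e., the restriction to the open positive orthant) is exactly what is needed for the relaxation in \eqref{OEGF_averagepressure_r} to be convex. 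Either route is complete; the final step from convexity of $f$ on the convex domain $\reals_{++}^{2}$ to convexity of $\epi f$ is the standard equivalence, as you say.
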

	\begin{proof}
		The proof proceeds by showing that the Hessian of $f(x,y)$ is positive semidefinite, i.e.,
		\begin{align}
			\hspace{-0.3cm} \nabla^{2}f(x,y) =
			\begin{bmatrix}
				2y^{2}/(x+y)^{3} & -2xy/(x+y)^{3} \\
				-2xy/(x+y)^{3} & 2x^{2}/(x+y)^{3}
			\end{bmatrix} \succeq 0.
		\end{align}
		Since $x > 0$ and $y > 0$, one can show that $z^{T}\nabla^{2}f(x,y)z \ge 0$ for all $z \in \reals^{2}$. In more detail, the above reduces to proving that $z_{1}^2y^2 + z_{2}^2x^2 - 2z_{1}z_{2}xy \ge 0$, which can be straightforwardly verified since $x^2 + y^2 -2xy \ge 0$ and $z_{1}^2 + z_{2}^2 - 2z_{1}z_{2} \ge 0$. This completes the proof.
	\end{proof}
	An MICP relaxation of Problem~\ref{OEGF} can now be formally written as
	\begin{align}
		\hspace{-1.25cm} \underset{\substack{p_{gi}^{t},p_{ij}^{t},\theta_{i}^{t},\wp_{m}^{t}\\\tilde{\wp}_{mn}^{t},\phi_{mn}^{t},\ell_{mn}^{t},\phi_{mn}^{{\rm in},t}\\\phi_{nm}^{{\rm out},t},z_{mn}^{t},\phi_{gm}^{t},\phi_{sm}^{t}\\\phi_{nm}^{{\rm c},t},\phi_{nm}^{+,t},\phi_{nm}^{-,t},\zeta_{mn}^{t},\xi_{mn}^{t}}} 
		{\mbox{ minimize}} & \sum_{t \in \mathcal{T}} \Bigg( \sum_{gi \in \mathcal{G}} \hspace{-0cm} f_{gi}\left(p_{gi}^{t}\right)\Bigg. + \hspace{-0.2cm} \hspace{-0cm} & \hspace{-0cm} & \nonumber
	\end{align}
	\vspace{-1.0cm}
	\begin{subequations}\label{OEGF_MISOCP}
		\begin{align}
			& \hspace{3cm} \Bigg. \sum_{sm\in \mathcal{S}} f_{sm}\left(\phi_{sm}^{t}\right)\Bigg) \hspace{-3cm} & & \label{OEGF_MISOCP_objective} \\
			& \hspace{-0.75cm} \text{subject to \cref{OEGF_Pminmax,OEGF_Pramp,OEGF_MWlimits,OEGF_anglediff,OEGF_KCLp}, \cref{OEGF_gasflow_GFG,OEGF_Fminmax,OEGF_PressureMinMax,OEGF_Nodal},  \cref{OEGF_linepack1,OEGF_continuity,OEGF_FlowPipeMinMax,OEGF_compflowplusminus,OEGF_compgastrap,OEGF_compplus,OEGF_compminus,OEGF_compflow,OEGF_compboost1,OEGF_compboost2,OEGF_compnoboost1,OEGF_compnoboost2}}, \hspace{-2cm} & & \nonumber \\ 
			& \hspace{0.75cm} \text{\cref{OEGF_MISOCP_Pij,OEGF_MISOCP_Pji}, \cref{OEGF_MISOCP_FlowPipeMinMax,OEGF_MISOCP_PressureMax,OEGF_MISOCP_PressureMin,OEGF_MISOCP_binary}, \cref{OEGF_MISOCP_xim_max,OEGF_MISOCP_xim_min,OEGF_MISOCP_xin_max,OEGF_MISOCP_xin_min,OEGF_MISOCP_zetan_max,OEGF_MISOCP_zetan_min,OEGF_MISOCP_zetam_max,OEGF_MISOCP_zetam_min,OEGF_MISOCP_gasflow_pipes_r,OEGF_averagepressure_r}}. \hspace{-2cm} & & \label{OEGF_MISOCP_common}
		\end{align}
	\end{subequations}
	Problem~\ref{OEGF_MISOCP} is a tractable \emph{convex} MINLP problem\footnote{An MINLP problem is called ``convex MINLP'' if it becomes a convex NLP when the integrality constraints are relaxed.} that can be solved using the (iterative) \emph{outer approximation} (OA) \cite{Duran1986_OuterApproximation}. Alternatively, if constraint~\eqref{OEGF_averagepressure_r} is replaced by a tight polyhedral envelope using a sufficiently large number of halfspaces, Problem~\ref{OEGF_MISOCP} can be transformed into an MISOCP that can be directly handled by powerful MISOCP solvers such as Gurobi \cite{Gurobi2019}. The construction of such a tight polyhedral envelope is discussed in the next section. Numerical evaluation in Section~\ref{Sec_NumericalEval} will show that, although tractable compared to solving Problem~\ref{OEGF} directly using NLBB solvers, the MICP problem in \eqref{OEGF_MISOCP} converges to \emph{infeasible} solutions to the original problem in \eqref{OEGF}. Nonetheless, this MICP relaxation provides a good-quality \emph{lower bound} for assessing the optimality of solutions to Problem~\ref{OEGF} from methods such as NLBB solvers and the ones proposed in this work.
	
\section{Polyhedral envelopes}\label{Sec_PolyEnv}

	As a first step towards formulating a computationally efficient LP approximation of Problem~\ref{OEGF}, the below discourse introduces polyhedral envelopes for the nonlinear terms in \cref{OEGF_Pij,OEGF_Pji}, \cref{OEGF_gasflow_pipes,OEGF_averagepressure}, and a PWL approximation of the quadratic cost functions in \eqref{OEGF_objective}.

\subsection{Polyhedral envelope of $x^{2}$}
	
	A polyhedral envelope of a nonconvex set of the form $\mathcal{V}= \left\{x^{2} | x \in \left[\underline{x},\overline{x}\right]\right\}$, can be obtained by, (i), finding the supporting hyperplanes passing through points $\left( \underline{x},\underline{x}^2 \right) $ and $\left( \overline{x},\overline{x}^2 \right) $, and (ii), finding the supporting hyperplanes obtained by \emph{outer} PWL approximations of the set. In more detail, $l$ points $x_1,\ldots,x_l$ are selected in the interval $\left[\underline{x},\overline{x}\right]$, which allows adding $(l-1) + 1$ halfspaces of the form
	\[$$
	\begin{subnumcases}{\label{convV} \hspace{-0mm} {\rm conv}\left(\mathcal{V}\right)= \!}
		\! v \geq \left(2 x_{h}\right)x-x_{h}^{2}, \quad h=\left\{1,\ldots,l\right\}, \label{convV_v1}\\
		\! v \leq \left(\overline{x}+\underline{x}\right)x-\overline{x} \underline{x} . \label{convV_v2}
	\end{subnumcases}
	$$ \nonumber\]

\subsection{Polyhedral envelope of $x \left| x \right|$}	
	
	\begin{figure}[t!]
		\centering{
			\includegraphics[width=\columnwidth] {./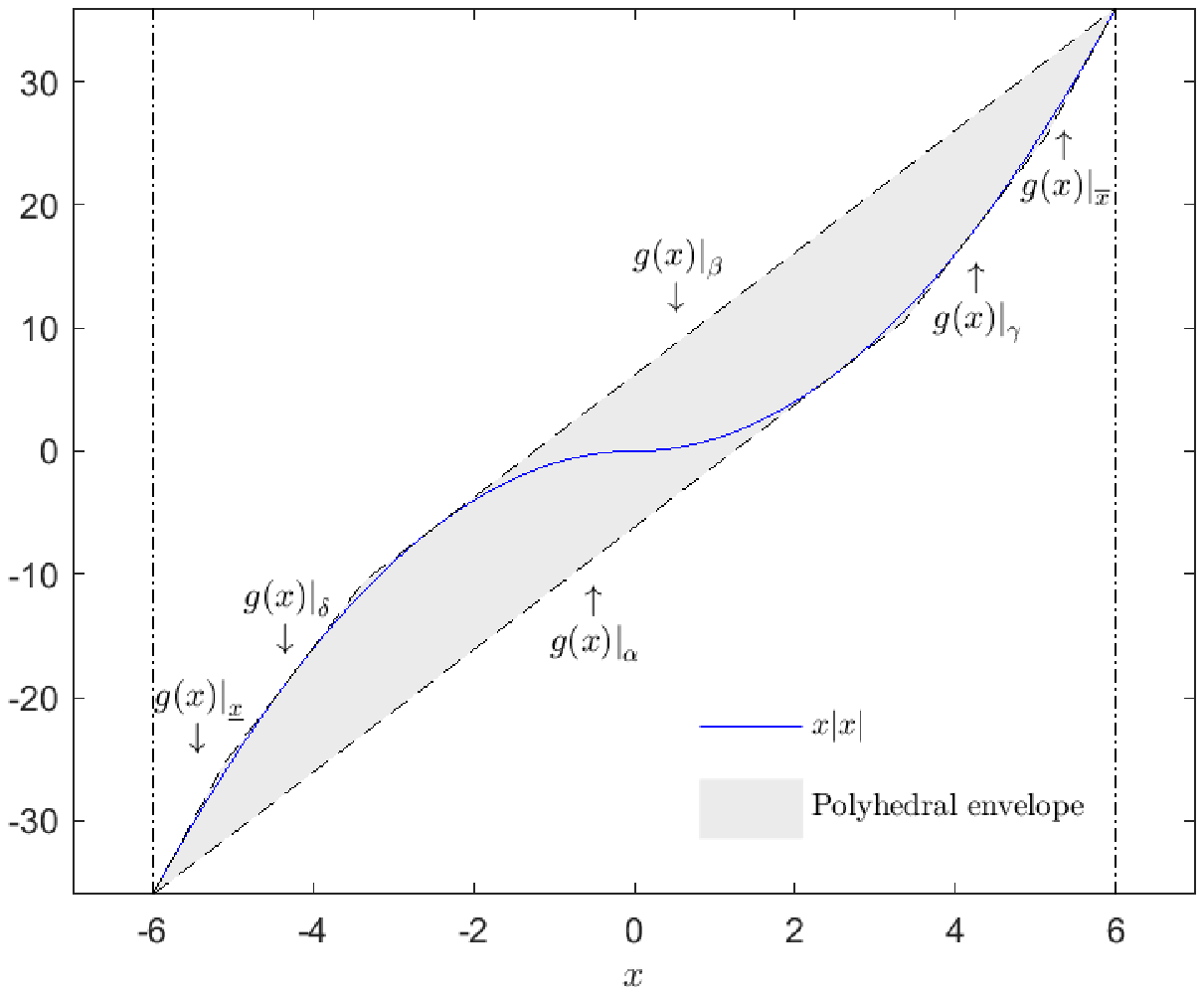}}
		\caption{Polyhedral envelope of $\mathcal{\mathcal{U}}= \left\{x \left| x \right| | x \in \left[\underline{x},\overline{x}\right]\right\}$.}
		\label{fig_xabsx}
	\end{figure}	
	A polyhedral envelope of a nonconvex set of the form $\mathcal{\mathcal{U}}= \left\{x \left| x \right| | x \in \left[\underline{x},\overline{x}\right]\right\}$ is obtained by determining the parameters of the four supporting hyperplanes passing through points $\left( \underline{x},\underline{x} \left|\underline{x} \right| \right) $ and $\left( \overline{x},\overline{x}\left|\overline{x} \right| \right) $. This can be done by separately solving two nonlinear equations in one dimension for each gas pipe. The first supporting hyperplane passing through $\underline{x}$ has the form $\left.g(x)\right|_{\underline{x}} = \underline{x} \left|\underline{x} \right| + 2\left| \underline{x}\right|\left(x - \underline{x} \right)$. The second supporting hyperplane passing through $\underline{x}$ can be obtained by solving the one-dimensional equation $\left.g(z)\right|_{\underline{x}} = z\left| z\right| + 2\left| z\right| \left(\underline{x}- z \right) = \underline{x} \left|\underline{x} \right|$, and	whose solution is denoted by $z^{\star} = \alpha = \underline{x}(1-\sqrt{2})$. Analogously, the first supporting hyperplane passing through $\overline{x}$ has the form $\left.g(x)\right|_{\overline{x}} = \overline{x} \left|\overline{x} \right| + 2\left| \overline{x}\right| \left(x - \overline{x} \right)$. The second supporting hyperplane passing through $\overline{x}$ can be obtained by solving $\left.g(z)\right|_{\overline{x}} = z\left| z\right| + 2\left| z\right| \left(\overline{x}- z \right) = \overline{x} \left|\overline{x} \right|$, and whose solution is denoted by $z^{\star} = \beta = \overline{x}(1-\sqrt{2})$. More supporting hyperplanes can be obtained in intervals $[\underline{x}, \beta]$ and $[\alpha, \overline{x}]$ to obtained a tighter polyhedral envelope but in this work only $\left.g(x)\right|_{\gamma}$ and $\left.g(x)\right|_{\delta}$ are added, where $\gamma$ and $\delta$ are obtained from the intersections of $\left.g(x)\right|_{\alpha}$ and $\left.g(x)\right|_{\overline{x}}$, and $\left.g(x)\right|_{\beta}$ and $\left.g(x)\right|_{\underline{x}}$, respectively. As a result, the polyhedral envelope of the nonconvex set $\mathcal{U}$ can be written as
	\begin{subnumcases}{\label{convU} \hspace{0mm} {\rm conv}\left(\mathcal{U}\right) = \!}
		\! u \geq x\left( 2 - \sqrt{8} \right)\underline{x} - \underline{x}^2\left( 3 - \sqrt{8} \right), \label{convU_u1} \\
		\! u \geq 2x\overline{x} - \overline{x}^2, \label{convU_u2} \\
		\! u \geq \gamma\left| \gamma\right| + 2\left| \gamma\right| \left(x- \gamma \right), \label{convU_u3} \\
		\! u \leq x\left(\sqrt{8} - 2\right)\overline{x} + \overline{x}^2\left(3 - \sqrt{8}\right), \label{convU_l1} \\
		\! u \leq -2x\underline{x} + \underline{x}^2, \label{convU_l2} \\
		\! u \leq \delta\left| \delta\right| + 2\left| \delta\right| \left(x - \delta \right). \label{convU_l3} 
	\end{subnumcases} 
	An illustration of \eqref{convU} is shown in Fig.~\ref{fig_xabsx}.\footnote{This formulation is valid for $\underline{x} < 0$ and $\overline{x} > 0$.}
	
\subsection{Polyhedral envelope of the average pressure}
	
	\begin{figure}[t]
		\centering{
			\includegraphics[width=\columnwidth] {./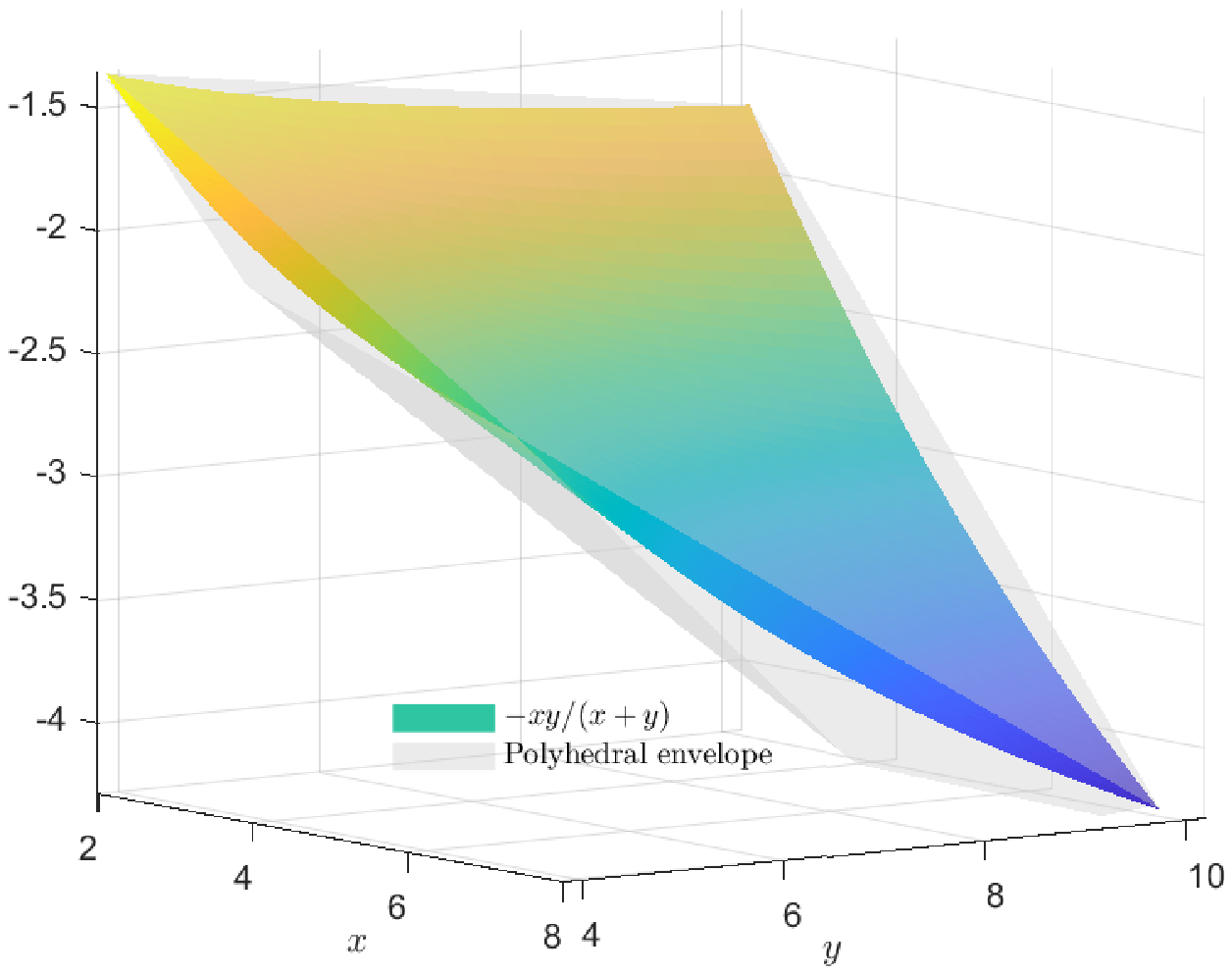}}
		\caption{Polyhedral envelope of $\mathcal{W}= \left\{ f(x,y) = -xy/(x+y) \right.$ $ \left. | \left(x,y\right) \in \left[\underline{x},\overline{x}\right] \times\left[\underline{y},\overline{y}\right]\right\}$.}
		\label{fig_xyoverxplusy}
	\end{figure}
	Following Theorem~\ref{Averagepressure_convex}, a tight polyhedral envelope of a nonconvex set of the form $\mathcal{W}= \left\{ f(x,y) \right. $ $\left. = -xy/(x+y) | \left(x,y\right) \in \left[\underline{x},\overline{x}\right] \times\left[\underline{y},\overline{y}\right]\right\}$ can be obtained from the halfspaces associated with a sufficiently large number of uniformly spaced points in the 2-D grid defined by $\dom f$. However, since the aim of this construction is computational efficiency and not tightness, only the four extreme points in the 2-D grid defined by $\dom f$ are chosen for the construction of the polyhedral envelope. These four extreme points are $p1 = \left(\underline{x},\underline{y}, f\left(\underline{x},\underline{y}\right)\right) $, $p2 = \left(\overline{x},\underline{y}, f\left(\overline{x},\underline{y}\right)\right) $, $p3 = \left(\underline{x},\overline{y}, f\left(\underline{x},\overline{y}\right)\right) $, and $p4 = \left(\overline{x},\overline{y}, f\left(\overline{x},\overline{y}\right)\right) $. In more detail, the first supporting hyperplane in the 3-D Euclidean space, which can be obtained from points $p1$, $p2$, and $p3$, is written as $g\left( x,y\right) = f\left(\underline{x},\underline{y}\right) - \frac{a_{x}}{a_{z}}\left( x - \underline{x} \right) - \frac{a_{y}}{a_{z}}\left( y - \underline{y} \right)$, where $a = \left(p2-p1 \right) \times \left(p3-p1 \right) $ is the normal vector. Similarly, the second supporting hyperplane can be obtained from points $p2$, $p3$, and $p4$, and can be written as $g\left( x,y\right) = f\left(\overline{x},\overline{y}\right) - \frac{b_{x}}{b_{z}}\left( x - \overline{x} \right) - \frac{b_{y}}{b_{z}}\left( y - \overline{y} \right)$, where $b = \left(p2-p4 \right) \times \left(p3-p4 \right) $ is the normal vector. The remaining four supporting hyperplanes can be obtained from the first-order Taylor series approximation of $f(x,y)$ at the four extreme points $p1$, $p2$, $p3$, and $p4$. The polyhedral envelope of $\mathcal{W}$ can now be written as
	\begin{flalign*}
		{\rm conv}\left(\mathcal{W}\right) = &&
	\end{flalign*}
	\begin{subnumcases}{\label{convW} \hspace{-0mm} \!}
		\! w \geq f(\overline{x},\overline{y}) + \frac{\partial f(\overline{x},\overline{y})}{\partial x} \left( x - \overline{x} \right) + \frac{\partial f(\overline{x},\overline{y})}{\partial y} \left( y - \overline{y} \right), \hspace{-0.5cm} & \label{convW_u1} \\
		\! w \geq f(\underline{x},\underline{y}) + \frac{\partial f(\underline{x},\underline{y})}{\partial x} \left( x - \underline{x} \right) + \frac{\partial f(\underline{x},\underline{y})}{\partial y} \left( y - \underline{y} \right), \hspace{-0.5cm} & \label{convW_u2} \\
		\! w \geq f(\overline{x},\underline{y}) + \frac{\partial f(\overline{x},\underline{y})}{\partial x} \left( x - \overline{x} \right) + \frac{\partial f(\overline{x},\underline{y})}{\partial y} \left( y - \underline{y} \right), \hspace{-0.5cm} & \label{convW_u3} \\
		\! w \geq f(\underline{x},\overline{y}) + \frac{\partial f(\underline{x},\overline{y})}{\partial x} \left( x - \underline{x} \right) + \frac{\partial f(\underline{x},\overline{y})}{\partial y} \left( y - \overline{y} \right), \hspace{-0.5cm} & \label{convW_u4} \\
		\! w \leq f(\underline{x},\underline{y}) - \frac{a_{x}}{a_{z}}\left( x - \underline{x} \right) - \frac{a_{y}}{a_{z}}\left( y - \underline{y} \right), & \label{convW_l1} \\
		\! w \leq f(\overline{x},\overline{y}) - \frac{b_{x}}{b_{z}}\left( x - \overline{x} \right) - \frac{b_{y}}{b_{z}}\left( y - \overline{y} \right). & \label{convW_l2}
	\end{subnumcases}
	An illustration of \eqref{convW} is shown in Fig.~\ref{fig_xyoverxplusy}.

\subsection{PWL approximation of a quadratic cost function}
	
	The last step is to substitute the quadratic terms in $f_{gi}\left( p_{gi}^{t} \right)$ by corresponding variables and rotated SOC constraints which can then be tightly approximated by a lifted polyhedron as in \cite{Mhanna2016_TightLPfortheOPF}. This construction requires far less inequality constraints than standard piecewise linear (PWL) approximations for the same accuracy. Specifically, $\left|\mathcal{G}\right| \times \left|\mathcal{T}\right|$ variables $p_{g}^{t}$ and associated linear constraints of the form $p_{g}^{t}=\Delta t \sqrt{c_{2,gi}}p_{gi}^{t}$, for all $ gi \in \mathcal{G}$, $t \in \mathcal{T}$, are introduced along with $N \times \left|\mathcal{T}\right|$ variables $\delta_{n}^{t}$ and constraints of the form $\delta_{n}^{t} \geq ( p_{2n-1}^{t} )^2 + ( p_{2n}^{t} )^2 $, for which the lifted polyhedral construction is denoted as
	\begin{align}\label{RSOC_Obj}
		\hspace{-0.2cm} \mathcal{P}_{k}^{\rm R} \left( \left( p_{2n-1}^{t}\right)^2 + \left( p_{2n}^{t} \right)^2 \leq \delta_{n}^{t} \right), n \in \left\{1,\ldots,N\right\}, t \in \mathcal{T}
	\end{align}
	where $N = \left\lfloor \left|\mathcal{G}\right| /2\right\rfloor+\left\lceil \left| \mathcal{G} \right| /2 - \left\lfloor \left| \mathcal{G} \right| /2\right\rfloor \right\rceil$.

\subsection{Polyhedral relaxation of the MOEGF}
	
	In light of the above, the LP-relaxed MOEGF with the polyhedral envelopes and PWL objective function can be written as
	\begin{align}
		\hspace{-0.5cm} \underset{\substack{p_{gi}^{t},\delta_{n}^{t},p_{ij}^{t},\theta_{i}^{t},v_{ij}^{t},\wp_{m}^{t}\\v_{m}^{t},\tilde{\wp}_{mn}^{t},w_{mn}^{t},\phi_{mn}^{t},u_{mn}^{t}\\\ell_{mn}^{t},\phi_{mn}^{{\rm in},t},\phi_{nm}^{{\rm out},t},z_{mn}^{t}\\\phi_{gm}^{t},\phi_{sm}^{t},\phi_{nm}^{{\rm c},t},\phi_{nm}^{+,t},\phi_{nm}^{-,t} }} 
		{\mbox{ minimize}} & \sum_{t \in \mathcal{T}} \Bigg( \sum_{n=1}^{N} \delta_{n}^{t} + \hspace{-0.0cm} \sum_{gi \in \mathcal{G}} \hspace{-0cm} c_{1,gi}\left(p_{gi}^{t}\Delta t \right) \Bigg. \hspace{-0.2cm} & \hspace{-0cm} & \nonumber
	\end{align}
	\vspace{-1.0cm}
	\begin{subequations}\label{OEGF_Poly}
		\begin{align}
			& \hspace{2cm} + c_{0,gi} \Delta t + \Bigg. \sum_{sm\in \mathcal{S}} f_{sm}\left(\phi_{sm}^{t}\right)\Bigg) \hspace{-3cm} & & \label{OEGF_Poly_objective} \\
			& \hspace{-0.75cm} \text{subject to \cref{OEGF_Pminmax,OEGF_Pramp,OEGF_MWlimits,OEGF_anglediff,OEGF_KCLp}, \cref{OEGF_gasflow_GFG,OEGF_Fminmax,OEGF_PressureMinMax,OEGF_Nodal}, \cref{OEGF_averageflow}, \cref{OEGF_linepack1,OEGF_continuity,OEGF_FlowPipeMinMax,OEGF_compflowplusminus,OEGF_compgastrap,OEGF_compplus,OEGF_compminus,OEGF_compflow,OEGF_compboost1,OEGF_compboost2,OEGF_compnoboost1,OEGF_compnoboost2}, \cref{RSOC_Obj}} \hspace{-2cm} & & \label{OEGF_Poly_common} \\ 
			p_{ij}^{t}&=g_{ij}0.5{\rm conv}\left( \left(\theta_{ij}^{t}\right)^2 \right) + b_{ij}\theta_{ij}^{t}, & \hspace{-0.0cm} ij \in \mathcal{L} & \label{OEGF_Poly_Pij} \\
			p_{ji}^{t}&= g_{ji}0.5{\rm conv}\left( \left(\theta_{ij}^{t}\right)^2 \right) - b_{ji}\theta_{ij}^{t}, & \hspace{-0.0cm} ji \in \mathcal{L}&^{\rm t} \label{OEGF_Poly_Pji} \\
			{\rm conv}&\left( \phi_{mn}^{t}\left|\phi_{mn}^{t} \right|\right) = \Phi_{mn} {\rm conv}\left( \left(\wp_{m}^{t}\right)^2 \right) \hspace{-0.5cm} & & \nonumber \\
			& \hspace{2.5cm} - {\rm conv}\left( \left(\wp_{n}^{t}\right)^2 \right), & mn \in \mathcal{P} & \label{OEGF_Poly_gasflow_pipes}\\
			& \hspace{-1cm} \tilde{\wp}_{mn}^{t} = \frac{2}{3} \left(\wp_{m}^{t} + \wp_{n}^{t} + {\rm conv}\left( \frac{-\wp_{m}^{t}\wp_{n}^{t}}{\wp_{m}^{t} + \wp_{n}^{t}} \right) \right), \hspace{-0.3cm} & mn \in \mathcal{P} & \label{OEGF_Poly_averagepressure} \\
			&\hspace{2cm} z_{mn}^{t} \in \left[ 0 , 1 \right] , & \hspace{-0cm} mn \in \mathcal{E} & \label{OEGF_Poly_nobinary}
		\end{align}
	\end{subequations}
	for $t \in \mathcal{T}$. These polyhedral envelopes depend only on the properties and topology of the network and can thus be formed once, in a single computationally-cheap preprocessing stage. The solution of Problem \ref{OEGF_Poly} can be used as a warm start for the two iterative algorithms discussed in the next two sections.

\section{Iterative MILP-based method}\label{Sec_SMILP}

	\begin{algorithm}[t!]
		\caption{Iterative MILP-based method}
		\begin{adjustwidth}{-1em}{}
			\begin{algorithmic}[1]
				\scriptsize
				\STATE \parbox[t]{\dimexpr\linewidth-0.75cm}{\textbf{Initialization:}
					Set $k=1$, $\overline{K} = 40$, $\epsilon \in \left[ 10^{-10}, 10^{-4} \right] $, $\sigma >> 0$ , $\overline{\alpha} > \alpha_{mn}^{t,(1)} > 0$ for all $ mn \in \mathcal{P}$ and $t \in \mathcal{T}$, $\gamma > 0$, and $\mathcal{I}_{i} = \left\lbrace \emptyset \right\rbrace$ for all $ i \in \left\lbrace 1,\ldots,p_{\rm e} \right\rbrace $. Obtain the initial point $x^{(1)}$ by either solving Problem~\ref{OEGF_Poly} (warm start) or by choosing $x^{(1)}$ randomly such that $x^{(1)}$ is in the domain of the all constraint functions of Problem~\ref{OEGF_Generalproblem}. The point $x^{(1)}$ need not be feasible.\strut}
				\algrule
				\normalsize
				\STATEx {\qquad \qquad \qquad \qquad \ \underline{Phase I (LP)}}
				\scriptsize
				\WHILE {${\rm max}\left( \left( \left| h_{i}\left( x^{(k)}\right) \right| \right)_{i=1,\ldots,p}\right) > \epsilon$}
					\STATE \parbox[t]{\dimexpr\linewidth-0.75cm}{
						\begin{subequations}\label{OEGF_TaylorI}
							\begin{align}
								& \hspace{-0.6cm} x^{(k+1)} = & & \nonumber \\
								& \hspace{-0.6cm} \underset {\substack{x,r_{ij}^{t}}} 
								{\argmin} \ \eqref{OEGF_Poly_objective} + \sum_{t \in \mathcal{T}} \left( \sum_{ij \in \mathcal{L}} \sigma r_{ij}^{t} + \hspace{-0.1cm} \sum_{mn \in \mathcal{P}} \hspace{-0.0cm} \alpha_{mn}^{t,(k)} \left|\phi_{mn}^{t} - \phi_{mn}^{t,(k)} \right| \right) \hspace{-5cm} & & \label{OEGF_Taylor_objectiveI}\\
								& \text{subject to } a_{i}^{T}x \leq b_{i}, \hspace{-0.0cm} & & i=1,\ldots,o \label{OEGF_Taylor_linear} \\
								& \hspace{-0.5cm} h_{i}(x^{(k)}) + \nabla h_{i}(x^{(k)})^{T}(x - x^{(k)}) = r_{i}, \hspace{-0.0cm} & & i = 1,\ldots,p_{\rm e} \label{OEGF_Taylor_eq_elec_eq}\\
								& \hspace{-0.5cm} h_{i}(x^{(\kappa)}) + \nabla h_{i}(x^{(\kappa)})^{T}(x - x^{(\kappa)}) \leq 0, \hspace{-0.0cm} & & i = 1,\ldots,p_{\rm e}, \kappa \in \mathcal{I}_{i} \label{OEGF_Taylor_eq_elec_ineq}\\
								& \hspace{-0.5cm} r_{i} \ge 0, \hspace{-0.0cm} & & i = 1,\ldots,p_{\rm e} \label{OEGF_Taylor_eq_elec_ri}\\
								& \hspace{-0.5cm} h_{i}(x^{(k)}) + \nabla h_{i}(x^{(k)})^{T}(x - x^{(k)}) = 0, \hspace{-0.31cm} & & i=p_{\rm e}+1,\ldots,p \label{OEGF_Taylor_eq_gas}\\
								& \hspace{1cm} x_{i} \in \left[ 0,1 \right] , \hspace{-0.31cm} & & i=1,\ldots,r. \label{OEGF_Taylor_01_box}
							\end{align}
						\end{subequations} \vspace{-0.2cm} \strut.}
					\FOR {$i = 1,\ldots,p_{\rm e}$}
						\IF {$h_{i}\left( x^{(k+1)}\right) < 0$}
							\STATE { $\mathcal{I}_{i} := \mathcal{I}_{i} \cup \{k\} $.}
						\ENDIF
					\ENDFOR 
					\FOR {$i = p_{\rm e}+1,\ldots,p$}
						\IF {$ \left| h_{i}\left( x^{(k+1)}\right) \right| \geq \left| h_{i}\left( x^{(k)}\right) \right| \textbf{and} \left| h_{i}\left( x^{(k+1)}\right) \right| > \epsilon $}
							\STATE $\alpha_{mn}^{t,(k+1)} := \min\left( \overline{\alpha}, \gamma \alpha_{mn}^{t,(k)} \right) $.
						\ENDIF
					\ENDFOR 
					\STATE {$k \leftarrow k + 1 $.}
				\ENDWHILE
				\algrule
				\normalsize
				\STATEx {\qquad \qquad \qquad \qquad \ \underline{Phase II (MILP)}}
				\scriptsize
				\WHILE {${\rm max}\left( \left( \left| h_{i}\left( x^{(k)}\right) \right| \right)_{i=1,\ldots,p}\right) > \epsilon$}
					\STATE \parbox[t]{\dimexpr\linewidth-0.75cm}{
						\begin{subequations}\label{OEGF_TaylorI_MILP}
							\begin{align}
								& \hspace{-1cm} x^{(k+1)} = \underset {\substack{x,r_{ij}^{t}}} 
								{\argmin} \ \eqref{OEGF_Taylor_objectiveI} \hspace{-5cm} & & \label{OEGF_Taylor_objectiveI_MILP}\\
								& \hspace{-1cm} \text{subject to \cref{OEGF_Taylor_linear,OEGF_Taylor_eq_elec_eq,OEGF_Taylor_eq_elec_ineq,OEGF_Taylor_eq_elec_ri,OEGF_Taylor_eq_gas}} & & \label{OEGF_Taylor_Common} \\
								& \hspace{0cm} x_{i} \in \left\lbrace 0,1 \right\rbrace, \hspace{-0.31cm} & & i=1,\ldots,r. \label{OEGF_Taylor_01}
							\end{align}
						\end{subequations} \vspace{-0.2cm} \strut.}
						\STATE {Same as lines 4 to 13.}
						\STATE {$k \leftarrow k + 1 $.}
				\ENDWHILE
			\end{algorithmic} 
		\end{adjustwidth}
		\label{Alg_SMILP}
	\end{algorithm}
	The proposed MILP-based method for directly solving Problem~\ref{OEGF} is described in Algorithm~\ref{Alg_SMILP}, which is divided into two phases. Phase I consists of a sequence of carefully  coordinated LP problems for solving the continuous relaxation of Problem~\ref{OEGF}, i.e., Problem~\ref{OEGF} with \eqref{OEGF_Poly_nobinary} instead of \eqref{OEGF_binary}. The solution from Phase I is then used as a warm start for Phase II which consists of a sequence of carefully coordinated MILP problems that converges to a feasible solution to Problem~\ref{OEGF}. Phase I is akin to SLP which was originally introduced in \cite{Cheney1959_NewtonsmethodandTchebycheffApprox,Kelley1960_CuttingPlaneMethod}. SLP, which is also known at the \emph{cutting plane method}, consists of solving the original NLP problem by solving a series of LP problems generated by approximating all the nonlinear constraints by their first-order Taylor series expansion around the current point $x^{(k)}$, where $k$ is the iteration number. The iterative procedure in Phase I can either be initialized from the solution of Problem \ref{OEGF_Poly} or from a random point that lies in the domain of the all constraint functions of Problem~\ref{OEGF}. The purpose of Problem \ref{OEGF_Poly} is two fold. First, it provides a valid starting point for Algorithm~\ref{Alg_SMILP} when no prior information is available. Second, the superior quality of this starting point, compared to a random point, manifests in an improvement in overall convergence. The small computational overhead of Problem \ref{OEGF_Poly} is far outweighed by the improvement in overall convergence, as will be demonstrated in Section~\ref{Sec_NumericalEval}.
		
	The convergence of Phase I is enabled by two key features. The first consists of a finite set of closed halfspaces, delineated by \eqref{OEGF_Taylor_eq_elec_ineq}, implemented in conjunction with a finite set of hyperplanes of the form \eqref{OEGF_Taylor_eq_elec_eq}. However, instead of directly using supporting hyperplanes of the form $\left\lbrace x | h_{i}(x^{(k)}) + \nabla h_{i}(x^{(k)})^{T}(x - x^{(k)}) = 0, i=1,\ldots,p_{\rm e} \right\rbrace $ (to the nonconvex set of the form $\mathcal{V}$), which might lead to infeasible LP problems, Algorithm~\ref{Alg_SMILP} introduces non-negative slack variables $r_{ij}^{t}$ whose purpose is to prevent infeasible LP problems, especially during the first few iterations of the algorithm if it is initialized from a poor-quality starting point (random $x^{(1)}$). These variables are then minimized in the objective by assigning a relatively large value for parameter $\sigma$. This construction can be viewed as an iteratively refined polyhedral outer approximation of the nonconvex sets of the form $\mathcal{V}= \left\{x^{2} | x \in \left[\underline{x},\overline{x}\right]\right\}$, but with the use of both supporting hyperplanes \emph{and} halfspaces. Set $\mathcal{I}_{i}$ registers all previous iteration numbers that qualify as supporting halfspaces for $\mathcal{V}= \left\{x^{2} | x \in \left[\underline{x},\overline{x}\right]\right\}$. The condition on line 5 is satisfied when constraints \eqref{OEGF_Pij}-\eqref{OEGF_Pji} are violated at iteration $k$, in which case the supporting halfspace of the form \eqref{OEGF_Taylor_eq_elec_ineq} is added to Problem \ref{OEGF_TaylorI} at $k+1$. The second key feature is the term $\alpha_{mn}^{t,(k)} \left|\phi_{mn}^{t} - \phi_{mn}^{t,(k)} \right|$ in \eqref{OEGF_Taylor_objectiveI}, which controls the step size of the LP approximation of \eqref{OEGF_gasflow_pipes}. The step size parameter $\alpha_{mn}^{t,(k)}$ is automatically tuned on lines 9 to 13, predicated on the condition on line 10 which is true when the violations of constraints \eqref{OEGF_gasflow_pipes} and \eqref{OEGF_averagepressure} do not decrease as the algorithm iterates.
	
	Phase I terminates when the constraint with the largest violation, $C_{\rm max}^{k} = {\rm max}\left( \left( \left| h_{i}\left( x^{(k)}\right) \right| \right)_{i=1,\ldots,p}\right) $, does not exceed a certain tolerance $\epsilon$.\footnote{Feasibility here is measured with respect to Problem~\ref{OEGF} and not to the original problem with AC power flow constraints (which is not shown here).} The solution obtained at the termination of Phase I is likely to be non-integral, i.e., some of the $z_{mn}^{t}$ variables may not be strictly 0 or 1. This solution is therefore used as a warm start for Phase II which is iterative MILP-based algorithm that closely resembles Phase I but with the integrality constraints \eqref{OEGF_Taylor_01} instead of the relaxed ones in \eqref{OEGF_Taylor_01_box}. Phase II, and therefore Algorithm~\ref{Alg_SMILP}, terminates under the same conditions in Phase I but now with a guarantee that the integrality constraints are satisfied. The next section introduces a fast iterative LP-based method that can be used as an alternative to Algorithm~\ref{Alg_SMILP} in situations where only LP solvers are available.

\section{Iterative LP-based method}\label{Sec_SLPH}
	
	\begin{algorithm}[t!]
		\caption{Iterative LP-based mehod}
		\begin{adjustwidth}{-1em}{}
			\begin{algorithmic}[1]
				\scriptsize
				\STATE \parbox[t]{\dimexpr\linewidth-0.75cm}{\textbf{Initialization:}
					Set $k=1$, $\overline{K} = 40$, $k_{2} = 0$, $K_{f} = 2$, $\epsilon \in \left[ 10^{-10}, 10^{-4} \right] $, $\sigma >> 0$, $\beta > 0$, $\overline{\alpha} > \alpha_{mn}^{t,(1)} > 0$ for all $ mn \in \mathcal{P}$ and $t \in \mathcal{T}$, $\gamma > 0$, and $\mathcal{I}_{i} = \left\lbrace \emptyset \right\rbrace$ for all $ i \in \left\lbrace 1,\ldots,p_{\rm e} \right\rbrace $. Obtain the initial point $x^{(1)}$ by either solving Problem~\ref{OEGF_Poly} (warm start) or by choosing $x^{(1)}$ randomly such that $x^{(1)}$ is in the domain of the all constraint functions of Problem~\ref{OEGF_Generalproblem}. The point $x^{(1)}$ need not be feasible.\strut}
				\algrule
				\normalsize
				\STATEx {\qquad \qquad \qquad \qquad \ \underline{Phase I (LP)}}
				\scriptsize
				\STATE {Same as lines 2 to 15 of Algorithm~\ref{Alg_SMILP}.}
				\algrule
				\normalsize
				\STATEx {\qquad \qquad \qquad \qquad \ \underline{Phase II (LP)}}
				\scriptsize
				\FOR {$mn \in \mathcal{E}, t \in \mathcal{T}$}
					\IF {$\phi_{mn}^{t,(k)} \leq 0$}
						\STATE {$I_{mn}^{t} \leftarrow 0 $.}
					\ELSIF {$\phi_{mn}^{t,(k)} > 0$}
						\STATE {$I_{mn}^{t} \leftarrow 1 $.}
					\ENDIF
				\ENDFOR
				\WHILE {$ k \le \overline{K}$}
					\STATE \parbox[t]{\dimexpr\linewidth-0.75cm}{
						\begin{subequations}\label{OEGF_TaylorII}
							\begin{align}
								& \hspace{-1cm} x^{k+1} = \underset {\substack{x},r_{ij}^{t}} 
								{\argmin} \ \eqref{OEGF_Taylor_objectiveI} + \sum_{mn \in \mathcal{E}, t \in \mathcal{T}} \beta \left|z_{mn}^{t} - I_{mn}^{t} \right| \hspace{0cm} & & \label{OEGF_Taylor_objectiveII}\\
								& \hspace{-0cm} \text{subject to \cref{OEGF_Taylor_linear,OEGF_Taylor_eq_elec_eq,OEGF_Taylor_eq_elec_ineq,OEGF_Taylor_eq_elec_ri,OEGF_Taylor_eq_gas,OEGF_Taylor_01_box}}. & &
							\end{align}
						\end{subequations} \vspace{-0.2cm} \strut} 
					\IF{${\rm max}\left( \left( \left| h_{i}\left( x^{(k)}\right) \right| \right)_{i=1,\ldots,p}\right) \le \epsilon$ \textbf{and} all $z_{mn}^{t,(k+1)} \in \left\lbrace 0,1 \right\rbrace $}
						\STATE {\textbf{Break}.}
					\ELSE
						\IF {$ \text{mod}\left(k_{2},K_{f} \right) = 0 $ }
							\FOR {$mn \in \mathcal{E}, t \in \mathcal{T}$}
								\IF {$I_{mn}^{t} == 0$ \textbf{and} $z_{mn}^{t,(k+1)} > \epsilon$}
									\STATE $I_{mn}^{t} \leftarrow 1 $.
								\ELSIF {$I_{mn}^{t} == 1$ \textbf{and} $z_{mn}^{t,(k+1)} < 1 - \epsilon$}
									\STATE $I_{mn}^{t} \leftarrow 0 $.
								\ENDIF
							\ENDFOR
						\ENDIF
					\ENDIF	
					\STATE {$k \leftarrow k + 1 $.}
					\STATE {$k_{2} \leftarrow k_{2} + 1 $.}
				\ENDWHILE
			\end{algorithmic} 
		\end{adjustwidth}
		\label{Alg_SLPH}
	\end{algorithm}
	
	The LP-based heuristic, described in Algorithm~\ref{Alg_SLPH}, starts exactly like Algorithm~\ref{Alg_SMILP} by solving the continuous relaxation of Problem~\ref{OEGF} in Phase I, whose solution is then passed on to the LP-based heuristic in Phase II. The solution obtained at the termination of Phase I is likely to be non-integral, i.e., some of the $z_{mn}^{t}$ variables may not be strictly 0 or 1. Phase II therefore consists of a heuristic that ensures that the solution is integral and feasible within a tolerance $\epsilon$. In a nutshell, instead of a branch-and-bound algorithm, the heuristic in Phase II implements an elaborate \emph{steering} procedure controlled by the term $\beta \left|z_{mn}^{t} - I_{mn}^{t} \right|$ in \eqref{OEGF_Taylor_objectiveII}. More specifically, lines 3 to 8 set \emph{parameter} $I_{mn}^{t}$ for both compressors and pressure regulators to either 0 or 1 based on the direction of flow $\phi_{mn}^{t,(k)}$ from the solution of Phase I. These values of $I_{mn}^{t}$ are then passed on to Problem \ref{OEGF_TaylorII}. The purpose of lines 15 to 23 is to handle any remaining non-integral values of $z_{mn}^{t}$ after solving \eqref{OEGF_TaylorII}. There are two reasons a variable $z_{mn}^{t}$ can remain non-integral after solving \eqref{OEGF_TaylorII} at iteration $k$. The first pertains to the potential for decreasing the objective function value, and the second is related to feasibility. For instance, if $I_{mn}^{t}$ was set to 1 but \eqref{OEGF_TaylorII} resulted in $z_{mn}^{t,(k+1)} = 0.85$, the heuristic uses this as a clue to \emph{steer} $z_{mn}^{t,(k+2)}$ towards 0. In other words, what this means is that had $z_{mn}^{t,(k+1)}$ been forced to 1, the problem would have either converged to a solution with a larger objective function value, or to an infeasible point. Analogously, if $I_{mn}^{t}$ was set to 0 but \eqref{OEGF_TaylorII} resulted in $z_{mn}^{t,(k+1)} = 0.25$ for instance, the heuristic uses this as a clue to steer $z_{mn}^{t,(k+2)}$ towards 1. In other words, what this means is that had $z_{mn}^{t,(k+1)}$ been forced to 0, the problem would have either converged to a solution with a larger objective function value, or to an infeasible point. This procedure is only invoked every $K_{f}$ iterations, as opposed to at every iteration, in an effort to prevent potential oscillatory behavior. In summary, the procedure in Phase II does not force the binary variables to either 0 or 1, but instead uses the term $\beta \left|z_{mn}^{t} -I_{mn}^{t} \right|$ to steer these variables to take binary values predicated on a careful tuning of parameter $\beta$. Therefore, the value of $\beta$ is chosen to strike a good tradeoff between objective value minimization and infeasibility handling. Phase II is terminated when $C_{\rm max}^{k} = {\rm max}\left( \left( \left| h_{i}\left( x^{(k)}\right) \right| \right)_{i=1,\ldots,p}\right) \leq \epsilon$ and when all the $z_{mn}^{t}$ variables are integral.
	
	The procedure in Phase II, although technically a heuristic, is successful for this type of problem mainly because the direction of gas flow in compressors and regulators is predominantly determined by pipeline constraints \eqref{OEGF_gasflow_pipes} and the location of the gas demands. The heuristic in Algorithm 1 therefore exploits this property of the problem to recover an integral solution. Just like other local MINLP solvers such as Juniper and KNITRO \cite{KNITRO}, Algorithm~\ref{Alg_SLPH} cannot guarantee global optimality.
	
\section{Numerical evaluation}\label{Sec_NumericalEval} 
	
	\begin{table}[t!]
		\normalsize
		\centering
		\caption{Elements of the test cases.}
		\resizebox{\linewidth}{!}{%
			\begin{tabular}{ r c c c c c c c c c c}
				\hline
				Case	&	$\left| \mathcal{B} \right| $	&	$\left| \mathcal{L} \right| $	&	$\left| \mathcal{G}^{\rm ng} \right| $	&	$\left| \mathcal{G}^{\rm g} \right| $	&	$\left| \mathcal{N} \right| $	&	$\left| \mathcal{P} \right| $	&	$\left| \mathcal{C} \right| $	&	$\left| \mathcal{R} \right| $	&	$\left| \mathcal{S} \right| $	\\\hline
				A	&	5	&	6	&	3	&	2	&	7	&	4	&	2	&	0	&	2	\\\hline
				B	&	14	&	20	&	3	&	2	&	25	&	24	&	6	&	0	&	6	\\\hline
				C	&	72	&	122	&	14	&	17	&	47	&	39	&	10	&	7	&	5	\\\hline
		\end{tabular}}
		\label{table_Test_cases}
	\end{table}
	In this experimental setup, Julia v1.4.0 \cite{Bezanson2017_Julia} is used as a programming language along with JuMP v0.21.1 \cite{DunningHuchetteLubin2017_JuMP} as a frontend modeling language for the optimization problems. Gurobi v9.0.2 \cite{Gurobi2019} with the Barrier method (without a crossover strategy) is used to solve all the LP problems (including Problem~\ref{OEGF_Poly}). The MICP optimization problems in \eqref{OEGF_MISOCP} are also solved using Gurobi v9.0.2 but now with the \emph{linearized outer approximation} algorithm. The continuous relaxation of Problem~\ref{OEGF}, which is a nonconvex NLP problem, is solved using IPOPT v3.12.11 \cite{IPOPT} and linear solver MA57 \cite{MA57}. The original MINLP problem in \eqref{OEGF} is solved using the NLBB solver Juniper \cite{Juniper} with IPOPT v3.12.11 \cite{IPOPT} and linear solver MA57 \cite{MA57} for the NLP subproblems, and Gurobi v9.0.2 in the \emph{feasibility pump} heuristic at the root node. All simulations are conducted on a computing platform with an Intel Core i7-6820HK CPU at 2.7GHz, 64-bit operating system, and 32GB RAM. Five test systems are considered for the numerical evaluation of Algorithm~\ref{Alg_SMILP} with hourly ($\Delta t = 1$) demand and linepack data over $H = 24$ hours. Test case A consists of a 7-node gas system connected to a 5-bus electrical system, test case B consists of the Belgian gas system \cite{Zhang2018_MILPforIEGS} connected to the IEEE 14-bus electrical system \cite{Castillo2016_SLPforOPF}, and test cases C1, C2, and C3 consist of the real electricity and gas system of the state of Victoria, Australia for low (23/11/2019), medium (21/08/2019), and high (09/08/2019) demand days, respectively. Data for the actual gas network of the state of Victoria, the Victorian Declared Transmission System (DTS), was developed from \emph{scratch} with the help of industry support within the Future Fuels CRC project \cite{FFCRC}. The electrical network data for the state of Victoria is obtained from \cite{Xenophon2018_OpenGridModel} and updated to reflect the generation mix and network augmentations of 2019. A summary description of the three IEGS test cases is shown in Table~\ref{table_Test_cases}, and detailed data can be found in \cite{IEGS_data}.
	
	In Problem~\ref{OEGF_Poly}, functions $f_{gi}\left( p_{gi}^{t}\right) $ in \eqref{OEGF_Poly_objective} are approximated by a lifted polyhedron with accuracy of $1.15 \times 10^{-9}$ (see \cite{Mhanna2016_TightLPfortheOPF}), whereas $l=10$ is chosen for the \emph{outer} PWL approximation in the polyhedral envelopes of the quadratic terms in \cref{OEGF_Poly_Pij,OEGF_Poly_Pji}, and \cref{OEGF_Poly_gasflow_pipes}. The angle limits are set to $\underline{\theta}_{ij} = -45^{\circ}$ and $\overline{\theta}_{ij} = 45^{\circ}$, and are considered generous as in practice $\theta_{ij} = \overline{\theta}_{ij} - \underline{\theta}_{ij}$ typically does not exceed $\pm 10^{\circ}$ \cite{Purchala2005_UsefulnessofDCPF}. Furthermore, in the aim of improving the numerical conditioning of the problem, all the units are nondimensionalized with a base apparent power of $\SI{100}{\mega\VA}$, base flow rate of $\SI{100}{\meter\cubed\per\second}$, and base pressure of $10^6$$\SI{}{\pascal}$. Algorithm~\ref{Alg_SMILP} is initialized with $\epsilon = 10^{-4}$, $\alpha_{mn}^{t,(1)} = 0.1$, $\gamma = 10$, $\overline{\alpha} = 1000$, and $\sigma = 0.1{\max} \left( \left( {\max}\left( c_{2,gi},c_{1,gi}\right)  \right) _{gi \in \mathcal{G}} \right)$ for all test systems. In Algorithm~\ref{Alg_SLPH}, $\beta = 0.1 {\max} \left( \left(c_{sm} \right) _{sm \in \mathcal{S}} \right) $.
	
	The below starts with a numerical assessment of Phase I of Algorithm~\ref{Alg_SMILP} in Section~\ref{Sec_PhaseI}, followed by an assessment of Algorithm~\ref{Alg_SMILP} and Algorithm~\ref{Alg_SLPH} in Sections~\ref{Sec_Alg_SMILP} and \ref{Sec_Alg_SLPH}. Two metrics are used for measuring optimality. The first is the \emph{optimality gap} (\%), defined as
	\begin{align*}
		Ogap= \left( f_{0}\left( x^{\star} \right) - f_{0}\left( x^{ \rm cvx} \right) \right) /f_{0}\left( x^{\star} \right) \times 100,
	\end{align*}
	where $x^{\star}$ is the solution obtained by a local solver such as IPOPT, Juniper or Algorithms~\ref{Alg_SMILP} and~\ref{Alg_SLPH}, and $x^{\rm cvx}$ is the solution obtained by an MICP relaxation such as the one in Problem~\ref{OEGF_MISOCP}.\footnote{Note that IPOPT, Juniper, and Algorithms~\ref{Alg_SMILP} and~\ref{Alg_SLPH} cannot guarantee global optimality in this case. IPOPT is a local primal-dual IPM that consists of a sequence of second-order approximations coordinated by a filter line search method whereas Phase I of Algorithms~\ref{Alg_SMILP} and~\ref{Alg_SLPH} consists of a sequence of first-order approximations coordinated by \eqref{OEGF_Taylor_eq_elec_eq}, \eqref{OEGF_Taylor_eq_elec_ineq}, and the second and third terms in \eqref{OEGF_Taylor_objectiveII}.} The second is the \emph{relative optimality gap} (\%), defined as
	\begin{align*}
		ROgap= \left( f_{0}\left( x^{\dagger} \right) - f_{0}\left( x^{\star} \right) \right) /f_{0}\left( x^{\dagger} \right) \times 100,
	\end{align*} 
	where $x^{\star}$ is the solution obtained by a local IPM solver (IPOPT in this case), and $x^{\rm \dagger}$ is the solution obtained by Phase I of Algorithm~\ref{Alg_SMILP}.

\subsection{Evaluation of Phase I of Algorithm~\ref{Alg_SMILP}}\label{Sec_PhaseI}
	
	\begin{table*}[t]
			\centering
			\caption{Optimality and feasibility of Phase I of Algorithm~\ref{Alg_SMILP} compared to MOEGFc (solved using IPOPT \cite{IPOPT}), and MICPc (solved using Gurobi \cite{Gurobi2019}).}
			\resizebox{\linewidth}{!}{
				\begin{tabular}{| c | c | c | c | c | c | c | c | c | c | c |}
					\cline{2-11}
					\multicolumn{1}{c|}{} & \multicolumn{2}{c}{\textbf{\rule{0pt}{7pt} MICPc} (Gurobi)} & \multicolumn{2}{|c|}{\textbf{\rule{0pt}{7pt} MOEGFc} (IPOPT)} & \multicolumn{6}{c|}{\textbf{Phase I} (Gurobi)} \\
					\cline{1-1} \cline{4-11}
					\multicolumn{1}{|c|}{\textbf{Test}} & \multicolumn{2}{c|}{} & \multicolumn{2}{c|}{Cold start} & \multicolumn{3}{c|}{Cold start} & \multicolumn{3}{c|}{Warm start (Problem~\ref{OEGF_Poly})} \\
					\cline{2-3} \cline{4-11}
					\rule{0pt}{10pt} \textbf{case} & Cost (\$) & $C_{\rm max}^{\rm cvx}$ & Cost (\$) & $Ogap$ (\%) & Cost (\$) & $ROgap$ (\%) & $C_{\rm mean}^{\dagger}$ & Cost (\$) & $ROgap$ (\%) & $C_{\rm mean}^{\dagger}$ \\\hline
					A	&	99946.5	&	394.67	&	99946.5	&	-1.26E-06	&	99946.5	&	-1.02E-06	&	1.86E-09	&	99946.5	&	-1.15E-06	&	2.16E-10	\\\hline
					B	&	93999.6	&	195.00	&	94000.4	&	8.87E-04	&	94003.9	&	-3.70E-03	&	9.73E-08	&	94000.4	&	-3.94E-05	&	1.67E-07	\\\hline
					C1	&	2975270.4	&	10.25	&	2975990.2	&	2.42E-02	&	2975988.5	&	5.73E-05	&	2.51E-09	&	2975988.2	&	6.79E-05	&	3.98E-10	\\\hline
					C2	&	6315991.7	&	8.96	&	6318017.0	&	3.21E-02	&	6318445.6	&	-6.78E-03	&	2.90E-08	&	6318097.4	&	-1.27E-03	&	8.97E-09	 \\\hline
					C3	&	11358689.3	&	9.96	&	11359946.4	&	1.11E-02	&	11360188.9	&	-2.14E-03	&	3.78E-08	&	11359700.5	&	2.16E-03	&	1.42E-08	 \\\hline																	
			\end{tabular}}
			\label{Table_Opt_PhaseI}
	\end{table*}
	\begin{table}[t]
			\normalsize
			\centering
			\caption{Computational effort of Phase I of Algorithm~\ref{Alg_SMILP} compared to MOEGFc (solved using IPOPT \cite{IPOPT}), and MICPc (solved using Gurobi \cite{Gurobi2019}). The numbers in parenthesis denote the number of iterations of Phase I.}
			\resizebox{\linewidth}{!}{%
				\begin{tabular}{| c | c | c | c | c |}
					\hline
					\multicolumn{1}{|c|}{Test} & \multicolumn{4}{c|}{CPU time (s)} \\
					\cline{2-5}
					\rule{0pt}{10pt} Case & MICPc & IPOPT & Phase I (Cold) & Phase I (Warm) \\\hline
					A	&	8.1	&	1.5	&	0.2 (3)	&	0.3 (5)	\\\hline
					B	&	2856.3	&	32.0	&	4.1 (13)	&	2.2 (7)	\\\hline
					C1	&	8873.6	&	937.0	&	12.7 (10)	&	14.8 (12)	\\\hline
					C2	&	10751.7	&	993.0	&	19.1 (11)	&	17.2 (11)	 \\\hline
					C3	&	12069.1	&	551.3	&	34.4 (11)	&	18.3 (10)	 \\\hline
			\end{tabular}}
			\label{Table_CPU_PhaseI}
	\end{table}
	This section assesses the optimality, feasibility, and computational efficiency of Phase I of Algorithm~\ref{Alg_SMILP} (which is the same as Phase I of Algorithm~\ref{Alg_SLPH}), IPOPT on the continuous relaxation of Problem~\ref{OEGF}, and the MICP relaxation in \eqref{OEGF_MISOCP} with \eqref{OEGF_Poly_nobinary} instead of \eqref{OEGF_binary}. For ease of exposition, these three approaches are called ``Phase I'', ``MOEGFc'', and ``MICPc'', respectively. Two starting point strategies are considered in the initialization of Phase I and IPOPT (MOEGFc). The first is a \emph{cold start} strategy that assumes $\theta_{ij}^{t,(1)} = 0.01$ for all $ij \in \mathcal{L}$, $\phi_{mn}^{t,(1)} = 0.1\overline{\phi}_{mn}$ for all $mn \in \mathcal{P}$, and $\wp_{m}^{t,(1)} = 0.5(\overline{\wp}_{m}+\underline{\wp}_{m})$ for all $m \in \mathcal{N}$, $t \in \mathcal{T}$. The \emph{warm start} strategy consists of using the solution of Problem~\ref{OEGF_Poly} in the initialization of Phase I. Table~\ref{Table_Opt_PhaseI} compares the optimality and feasibility of Phase I to those of IPOPT on the continuous relaxation of Problem~\ref{OEGF} (MOEGFc), and Table~\ref{Table_CPU_PhaseI} compares the computational effort of the three.\footnote{Warm starting IPOPT did not noticeably affect the quality or the computational effort of the solution. Those results are therefore not shown in Table~\ref{Table_CPU_PhaseI}.} Table~\ref{Table_Opt_PhaseI} shows the objective function values of MICPc, MOEGFc, and Phase I under cold start, and Phase I under warm start in columns 2, 4, 6, and 9, respectively. Table~\ref{Table_Opt_PhaseI} also shows the maximum constraint violation of the solution of Problem~\ref{OEGF_MISOCP} with \eqref{OEGF_Poly_nobinary} instead of \eqref{OEGF_binary} (i.e., MICPc) in column 3 under $C_{\rm max}^{\rm cvx}$. These large constraint violations confirm the infeasibility of the MICPc solutions. Nonetheless, the MICPc can still provide a good-quality lower bound on the solution of MOEGFc as shown in column 5. More specifically, it can be inferred that the MOEGFc solution to test Case A is optimal, and the solution to test case C3 is within $1.65 \times 10^{-2}$\% of the optimum. Additionally, since the largest $ROgap$ in columns 7 and 10 does not exceed $5 \times 10^{-3}$, and the largest constraint violation at the termination of Phase I is around $10^{-5}$ (i.e., $C_{\rm max}^{\dagger} \approx 10^{-5}$), it can be concluded that Phase I reaches near-optimal and feasible solutions to MOEGFc. The mean violations, $C_{\rm mean}^{\dagger} = {\rm mean}\left( \left( \left| h_{i}\left( x^{\dagger}\right) \right| \right)_{i=1,\ldots,p}\right)$, at the termination of Phase I are in the order of $10^{-8}$ on average for all test cases. Decreasing the constraint feasibility tolerance beyond $10^{-4}$ results in negligible change in the objective function and in the total linepack in all test cases.
	The evolution of the maximum constraint violation $C_{\rm max}^{k}$ for test case C3 is shown in Fig.~\ref{fig_Max_viol_SLP}.
	\begin{figure}[t]
		\centering{
			\psfrag{k}{\footnotesize $k$ \normalsize}
			\psfrag{C}{\footnotesize $C_{\rm max}^{k}$ \normalsize}
			\psfrag{Victorian IEGS}{\footnotesize Test case C3 \normalsize}
			\includegraphics[width=\columnwidth] {./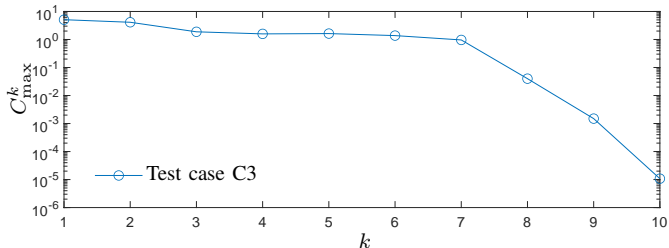}}
		\caption{Phase I's $C_{\rm max}^{k}$ vs $k$ for test case C3 under a warm start.}
		\label{fig_Max_viol_SLP}
	\end{figure}
	Furthermore, Table~\ref{Table_CPU_PhaseI} shows that Phase I solves MOEGFc in computation times that are at least one order of magnitude smaller than those of IPOPT. On the other hand, the computational effort of solving MICPc is disproportionately larger than those of IPOPT and Phase I due to the presence of binary variables (one for each pipeline as in \eqref{OEGF_MISOCP_binary}), which increases the complexity of the problem. Finally, although only two starting point strategies are shown in Tables \ref{Table_Opt_PhaseI} and \ref{Table_CPU_PhaseI}, the SLP algorithm underpinning Phase I is in fact robust to the choice of starting point $x^{(1)}$, owing to (i) the introduction slack variables to prevent infeasible problems, (ii) an automatic tuning of the convergence parameter $\alpha_{mn}^{t,(k)}$, and (iii) an iteratively refined polyhedral outer approximation of the nonconvex sets of the form $\mathcal{V}= \left\{x^{2} | x \in \left[\underline{x},\overline{x}\right]\right\}$ with the use of both supporting hyperplanes \emph{and} halfspaces. The computational advantages on the warm start (using the solution of Problem~\ref{OEGF_Poly}) become more prominent in the solution of Problem~\ref{OEGF}, as will be shown in the next two sections.

\subsection{Evaluation of Algorithm~\ref{Alg_SMILP}}\label{Sec_Alg_SMILP}
	
	\begin{table*}[t]
			\centering
			\caption{Optimality and feasibility of Algorithm~\ref{Alg_SMILP} compared to MOEGF (solved using Juniper \cite{Juniper}), and MICP (solved using Gurobi \cite{Gurobi2019}). ``NA'' designates instances that did not converge after one day of run time.}
			\resizebox{\linewidth}{!}{
				\begin{tabular}{| c | c | c | c | c | c | c | c | c | c | c |}
					\cline{2-11}
					\multicolumn{1}{c}{} & \multicolumn{2}{|c|}{\textbf{\rule{0pt}{7pt} MICP} (Gurobi)} & \multicolumn{2}{|c|}{\textbf{\rule{0pt}{7pt} MOEGF} (Juniper)} & \multicolumn{6}{c|}{\textbf{Algorithm~\ref{Alg_SMILP}} (Gurobi)} \\
					\cline{1-1} \cline{4-11}
					\multicolumn{1}{|c|}{\textbf{Test}} & \multicolumn{2}{c|}{} & \multicolumn{2}{c|}{Cold start} & \multicolumn{3}{c|}{Cold start} & \multicolumn{3}{c|}{Warm start (Problem~\ref{OEGF_Poly})} \\
					\cline{2-3} \cline{4-11}
					\rule{0pt}{10pt} \textbf{case} & Cost (\$) & $C_{\rm max}^{\rm cvx}$ & Cost (\$) & $Ogap$ (\%) & Cost (\$) & $Ogap$ (\%) & $C_{\rm mean}^{\dagger}$ & Cost (\$) & $Ogap$ (\%) & $C_{\rm mean}^{\dagger}$ \\\hline
					A	&	99946.5	&	400.66	&	99946.5	&	-1.71E-06	&	99946.7	&	2.06E-04	&	3.41E-08	&	99946.3	&	-2.47E-04	&	2.93E-08	\\\hline
					B	&	93999.8	&	189.17	&	NA	&	NA	&	94003.8	&	4.25E-03	&	2.60E-08	&	94001.6	&	1.88E-03	&	2.13E-08	\\\hline
					C1	&	2975274.4	&	10.72	&	NA	&	NA	&	2975884.3	&	2.05E-02	&	7.70E-08	&	2975975.6	&	2.36E-02	&	9.91E-09	\\\hline
					C2	&	6316166.5	&	8.16	&	NA	&	NA	&	6318099.4	&	3.06E-02	&	1.76E-08	&	6318009.4	&	2.92E-02	&	1.76E-08	 \\\hline
					C3	&	11358722.1	&	10.36	&	NA	&	NA	&	11360828.9	&	1.85E-02	&	5.72E-08	&	11360640.7	&	1.69E-02	&	3.08E-08	 \\\hline																	
			\end{tabular}}
			\label{Table_Opt_SMILP}
	\end{table*}
	\begin{table}[t]
			\normalsize
			\centering
			\caption[Performance Alg_SMILP]{Computational effort of Algorithm~\ref{Alg_SMILP} compared to Juniper \cite{Juniper}, and MICP (solved using Gurobi \cite{Gurobi2019}). The numbers in parenthesis denote the total number of iterations of  Algorithm~\ref{Alg_SMILP}. ``DNC'' designates instances that did not converge after one day of run time.\footnotemark}
			\resizebox{\linewidth}{!}{%
				\begin{tabular}{| c | c | c | c | c |}
					\hline
					\multicolumn{1}{|c|}{Test} & \multicolumn{4}{c|}{CPU time (s)} \\
					\cline{2-5}
					\rule{0pt}{10pt} Case & MICP & Juniper & Alg.~\ref{Alg_SMILP} (Cold) & Alg.~\ref{Alg_SMILP} (Warm) \\\hline
					A	&	9.6	&	2.9	&	3.3 (6)	&	1.6 (7)	\\\hline
					B	&	3986.2	&	DNC	&	14.4 (16)	&	11 (10)	\\\hline
					C1	&	12750.4	&	DNC	&	59.8 (24)	&	40.5 (17)	\\\hline
					C2	&	14867.6	&	DNC	&	77.8 (19)	&	65.3 (16)	 \\\hline
					C3	&	17205.7	&	DNC	&	536.7 (24)	&	305.6 (21)	 \\\hline					
			\end{tabular}}
			\label{Table_CPU_SMILP}
	\end{table}
	\footnotetext{The computation times of Algorithm~\ref{Alg_SMILP} with the warm start include the CPU times of Problem \ref{OEGF_Poly}.}
	This section assesses the optimality, feasibility, and computational efficiency of Algorithm~\ref{Alg_SMILP}, Juniper on Problem~\ref{OEGF}, and the MICP relaxation in \eqref{OEGF_MISOCP}. The starting point strategies adopted here are similar to the ones discussed in the previous section. The optimality and feasibility, and the computational effort of Algorithm~\ref{Alg_SMILP} compared to those of Juniper and the MICP relaxation in \eqref{OEGF_MISOCP} are shown in Tables~\ref{Table_Opt_SMILP} and~\ref{Table_CPU_SMILP}, respectively. It is evident from those two tables that the original MINLP problem in \eqref{OEGF} is extremely challenging to solve using an NLBB solver such as Juniper. In fact, Juniper did not converge (after one day of run time) on the practical-size systems in test cases B, C1, C2, C3. As a result, Table~\ref{Table_Opt_SMILP} only shows the optimality gap, as opposed to the relative optimality gap, of the solution of Algorithm~\ref{Alg_SMILP} as it is the only available feasible solution to Problem~\ref{OEGF} for test cases B, C1, C2, C3. The MICP relaxation in \eqref{OEGF_MISOCP} is also more challenging to solve due the additional number of binary variables associated with the non-pipe elements (constraint \eqref{OEGF_MISOCP_binary}). On test case C3, the MICP relaxation now takes more than 4 hours to converge. On the other hand, Algorithm~\ref{Alg_SMILP} with the warm start strategy takes 5 minutes. Not only Algorithm~\ref{Alg_SMILP} is substantially faster than the MICP relaxation in \eqref{OEGF_MISOCP}, it also converges to near-optimal and feasible solutions ($C_{\rm max}^{\dagger} \approx 10^{-5}$) with an average constraint violation in the order of $10^{-8}$. The solution to test case C3 is within $2.67 \times 10^{-2}$\% of the optimum. In comparison, the MICP relaxation converges to infeasible solutions (with large constraint violations $C_{\rm max}^{\rm cvx}$) in substantially longer computation times. The evolution of the maximum constraint violation $C_{\rm max}^{k}$ for test case C3 is shown in Fig.~\ref{fig_Max_viol_SMILP}.
	\begin{figure}[t]
		\centering{
			\psfrag{k}{\footnotesize $k$ \normalsize}
			\psfrag{C}{\footnotesize $C_{\rm max}^{k}$ \normalsize}
			\psfrag{Victorian IEGS}{\footnotesize Test case C3 \normalsize}
			\psfrag{Phase I}{\footnotesize Phase I \normalsize}
			\psfrag{Phase II}{\footnotesize Phase II \normalsize}
			\includegraphics[width=\columnwidth] {./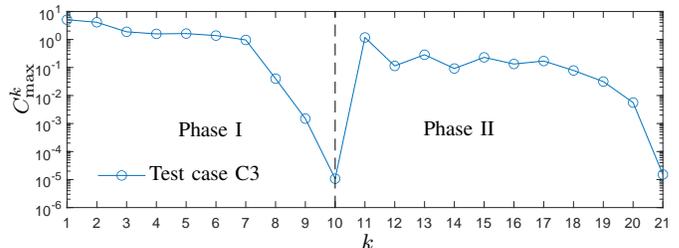}}
		\caption{Algorithm~\ref{Alg_SMILP}'s $C_{\rm max}^{k}$ vs $k$ for test case C3 under a warm start.}
		\label{fig_Max_viol_SMILP}
	\end{figure}
	Finally, the computational advantage of warm starting Algorithm~\ref{Alg_SMILP} from the solution of Problem \ref{OEGF_Poly} as compared to a cold start is more salient in this case, as the last column of Table~\ref{Table_CPU_SMILP} shows a noticeable improvement across all the test cases. In fact, Algorithm~\ref{Alg_SMILP} with the warm start strategy is at least one order of magnitude faster than Juniper and the MICP relaxtion in \eqref{OEGF_MISOCP}.

\subsection{Evaluation of Algorithm~\ref{Alg_SLPH}}\label{Sec_Alg_SLPH}

	\begin{table*}[t]
			\centering
			\caption{Optimality and feasibility of Algorithm~\ref{Alg_SLPH} compared to MOEGF (solved using Juniper \cite{Juniper}), and MICP (solved using Gurobi \cite{Gurobi2019}). ``NA'' designates instances that did not converge after one day of run time.}
			\resizebox{\linewidth}{!}{
				\begin{tabular}{| c | c | c | c | c | c | c | c | c | c | c |}
					\cline{2-11}
					\multicolumn{1}{c}{} & \multicolumn{2}{|c|}{\textbf{\rule{0pt}{7pt} MICP} (Gurobi)} & \multicolumn{2}{|c|}{\textbf{\rule{0pt}{7pt} MOEGF} (Juniper)} & \multicolumn{6}{c|}{\textbf{Algorithm~\ref{Alg_SLPH}} (Gurobi)} \\
					\cline{1-1} \cline{4-11}
					\multicolumn{1}{|c|}{\textbf{Test}} & \multicolumn{2}{c|}{} & \multicolumn{2}{c|}{Cold start} & \multicolumn{3}{c|}{Cold start} & \multicolumn{3}{c|}{Warm start (Problem~\ref{OEGF_Poly})} \\
					\cline{2-3} \cline{4-11}
					\rule{0pt}{10pt} \textbf{case} & Cost (\$) & $C_{\rm max}^{\rm cvx}$ & Cost (\$) & $Ogap$ (\%) & Cost (\$) & $Ogap$ (\%) & $C_{\rm mean}^{\dagger}$ & Cost (\$) & $Ogap$ (\%) & $C_{\rm mean}^{\dagger}$ \\\hline
					A	&	99946.5	&	400.66	&	99946.5	&	-1.71E-06	&	99948.8	&	2.30E-03	&	2.71E-07	&	99948.8	&	2.29E-03	&	2.40E-07	\\\hline
					B	&	93999.8	&	189.17	&	NA	&	NA	&	94004.7	&	5.22E-03	&	7.41E-08	&	94001.9	&	2.24E-03	&	2.23E-08	\\\hline
					C1	&	2975274.4	&	10.72	&	NA	&	NA	&	2979358.7	&	1.37E-01	&	1.63E-08	&	2979435.8	&	1.40E-01	&	4.20E-08	\\\hline
					C2	&	6316166.5	&	8.16	&	NA	&	NA	&	6322072.4	&	9.34E-02	&	1.50E-07	&	6321209.9	&	7.98E-02	&	4.31E-08	 \\\hline
					C3	&	11358722.1	&	10.36	&	NA	&	NA	&	11364530.9	&	5.11E-02	&	3.56E-08	&	11364465.6	&	5.05E-02	&	2.27E-08	 \\\hline																
			\end{tabular}}
			\label{Table_Opt_SLPH}
	\end{table*}
	\begin{table}[t]
			\normalsize
			\centering
			\caption[Performance Alg_SLPH]{Computational effort of Algorithm~\ref{Alg_SLPH} compared to Juniper \cite{Juniper}, and MICP (solved using Gurobi \cite{Gurobi2019}). The numbers in parenthesis denote the total number of iterations of Algorithm~\ref{Alg_SLPH}. ``DNC'' designates instances that did not converge after one day of run time.\footnotemark}
			\resizebox{\linewidth}{!}{%
				\begin{tabular}{| c | c | c | c | c |}
					\hline
					\multicolumn{1}{|c|}{Test} & \multicolumn{4}{c|}{CPU time (s)} \\
					\cline{2-5}
					\rule{0pt}{10pt} Case & MICP & Juniper & Alg.~\ref{Alg_SLPH} (Cold) & Alg.~\ref{Alg_SLPH} (Warm) \\\hline
					A	&	9.6	&	2.9	&	0.4 (6)	&	0.5 (8)	\\\hline
					B	&	3986.2	&	DNC	&	4.9 (16)	&	2.9 (10)	\\\hline
					C1	&	12750.4	&	DNC	&	23.3 (19)	&	21.1 (21)	\\\hline
					C2	&	14867.6	&	DNC	&	37.2 (17)	&	30.7 (17)	 \\\hline
					C3	&	17205.7	&	DNC	&	71.3 (20)	&	37.3 (18)	 \\\hline
			\end{tabular}}
			\label{Table_CPU_SLPH}
	\end{table}
	\footnotetext{The computation times of Algorithm~\ref{Alg_SLPH} with the warm start include the CPU times of Problem \ref{OEGF_Poly}.}
	The optimality and feasibility, and the computational effort of Algorithm~\ref{Alg_SLPH} compared to those of Juniper (on Problem~\ref{OEGF}) and the MICP relaxation in \eqref{OEGF_MISOCP} are shown in Tables~\ref{Table_Opt_SLPH} and~\ref{Table_CPU_SLPH}, respectively. In this case, the optimality gaps on the Victorian test cases C1, C2, and C3 are still less than 0.2\%, which corroborates the high quality of the solutions. The solutions are also feasible to within at most $1.8 \times 10^{-5}$ (i.e., $C_{\rm max}^{\dagger} \approx 1.8 \times 10^{-5}$) with an average constraint violation in the order of $10^{-8}$. The evolution of the maximum constraint violation $C_{\rm max}^{k}$ for test case C3 is shown in Fig.~\ref{fig_Max_viol_SLPH}.
	\begin{figure}[t]
		\centering{
			\psfrag{k}{\footnotesize $k$ \normalsize}
			\psfrag{C}{\footnotesize $C_{\rm max}^{k}$ \normalsize}
			\psfrag{Victorian IEGS}{\footnotesize Test case C3 \normalsize}
			\psfrag{Phase I}{\footnotesize Phase I \normalsize}
			\psfrag{Phase II}{\footnotesize Phase II \normalsize}
			\includegraphics[width=\columnwidth] {./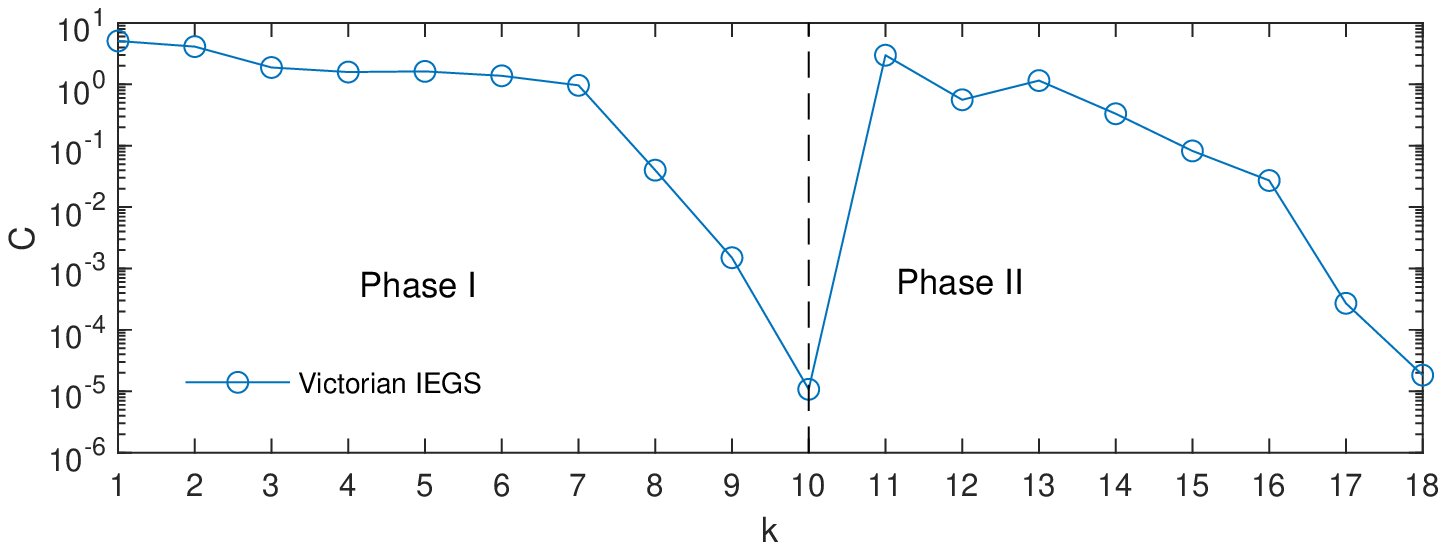}}
		\caption{Algorithm~\ref{Alg_SLPH}'s $C_{\rm max}^{k}$ vs $k$ for test case C3 under a warm start.}
		\label{fig_Max_viol_SLPH}
	\end{figure} 	
	Although it converges to slightly higher objective function values compared to Algorithm~\ref{Alg_SMILP}, the LP-based method in Algorithm~\ref{Alg_SLPH} is substantially faster. In particular, the speed-up is more than 8$\times$ on test case C3 under the same warm start strategy. And once again, the warm start strategy is generally noticeably faster than the cold start strategy, which underscores the value of using the solution of Problem~\ref{OEGF_Poly} as a starting point. Finally, it is worth noting that both Algorithms~\ref{Alg_SMILP} and \ref{Alg_SLPH} are exact only with respect to Problem~\ref{OEGF}, which incorporates a quasi-dynamic gas flow model, and not with respect to the original partial differential equations (PDEs) describing the full dynamic flow of gas.

\subsection{Linepack backtesting}\label{Sec_Linepack}
	
	To assess the validity of the solutions of Test cases C1, C2, and C3 from a physical gas network standpoint, they are backtested against actual 24-hour total linepack profiles for the state of Victoria, Australia. The 24-hour total linepack profiles of both Algorithm~\ref{Alg_SLPH} and the actual one from (AEMO) \cite{AEMO} are shown in Fig.~\ref{fig_Linepack} for the high demand day (09/08/2019).
	\begin{figure}[t]
		\centering{
			\psfrag{t}{\footnotesize $t$ ($\SI{}{\hour}$) \normalsize}
			\psfrag{Linepack}{\footnotesize Linepack ($\SI{}{\tera\joule}$) \normalsize}
			\psfrag{Actual}{\footnotesize Actual (AEMO) \normalsize}
			\psfrag{Algorithm}{\footnotesize Algorithm~\ref{Alg_SLPH} \normalsize}
			\includegraphics[width=\columnwidth] {./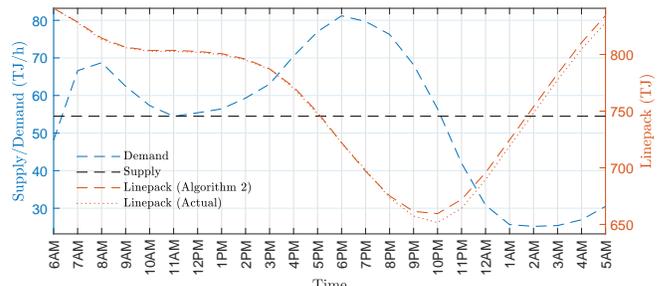}}
		\caption{Linepack backtesting for Test case C3.}
		\label{fig_Linepack}
	\end{figure}
	The maximum and mean errors between the two are $1.21\%$ and $0.38\%$, respectively, which could be attributed to the fact that the actual linepack on that high-demand day was not a result of optimized operation. The linepack backtesting not only serves as a testament to the validity of the solution of Algorithm~\ref{Alg_SLPH}, but also to the gas network model of the Victorian DTS which was developed from scratch as part of this work.
	
\section{Conclusion}\label{Sec_Conclusion}

	This paper introduced two novel SLP-based algorithms for efficiently solving the MOEGF problem. The first is an iterative MILP-based algorithm and the second is an iterative LP-based algorithm with an elaborate procedure for ensuring an integral solution. Numerical evaluation demonstrates that both algorithms can solve the MOEGF problem to high-quality \emph{feasible} solutions in computation times at least \emph{two orders of magnitude faster} than both a state-of-the-art NLBB MINLP solver and an MICP relaxation, on the \emph{real} electricity and gas transmission networks of the state of Victoria with actual linepack and demand profiles. Moreover, both approaches are warm-started from the solution of a novel polyhedral relaxation of the problem for a noticeable improvement in computation time compared to a cold start. Finally, while not claiming it is superior to existing MINLP solvers, the proposed iterative LP-based method represents a fast alternative for the specific type of problem addressed here, especially in settings where only LP solvers can be used (as is the case with existing dispatch engines of ISO like AEMO). To do so, the iterative LP-based method exploits the structure and engineering properties of the MOEGF problem and tailors a novel SLP approach, compounded by a fast heuristic that ensures an integral solution.

\section*{Acknowledgment}

	This work is supported by Future Fuels CRC as part of the R.P1.1-02: ``Regional Case Studies on Multi-Energy System Integration'' project. The cash and in-kind support from the industry participants is gratefully acknowledged.

\bibliographystyle{IEEEtran}
{\footnotesize
	\bibliography{SLPforIEGS}}

	\begin{IEEEbiography}[{\includegraphics[width=1in,height=1.25in,clip,keepaspectratio]{./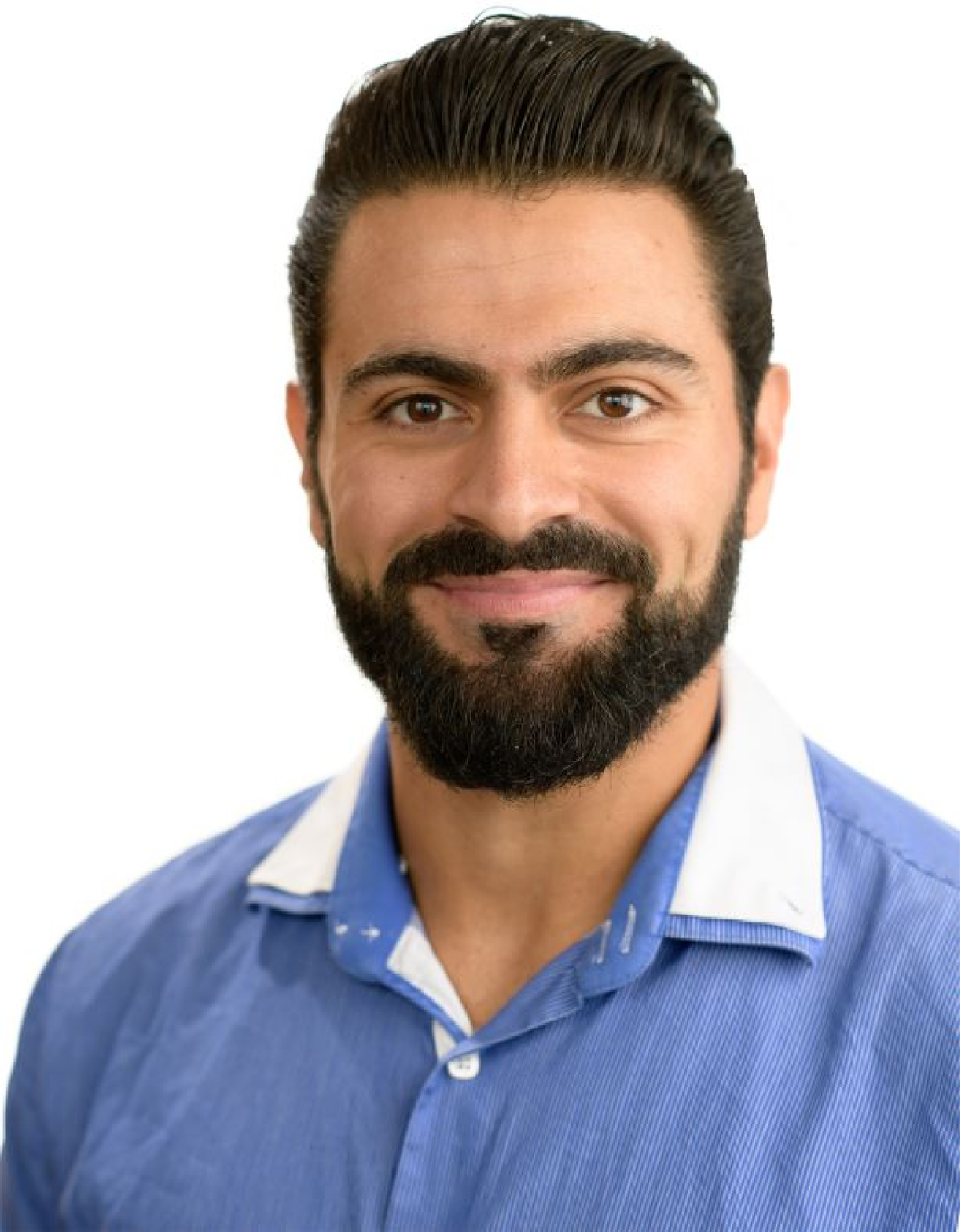}}]{Sleiman Mhanna}
		(S'13--M'16) received the B.Eng. degree (with high distinction) from the Notre Dame University, Lebanon, and the M.Eng. degree from the American University of Beirut, Lebanon, in 2010 and 2012, respectively, both in electrical engineering. He received the Ph.D. degree from the School of Electrical and Information Engineering, Centre for Future Energy Networks, University of Sydney, Australia, in 2016. He is currently a Research Fellow at the Department of Electrical and Electronic Engineering, The University of Melbourne, Australia. His research interests include computational methods for integrated multi-energy systems, decomposition methods, and demand response.
	\end{IEEEbiography}
	
		\begin{IEEEbiography}[{\includegraphics[width=1in,height=1.25in,clip,keepaspectratio]{./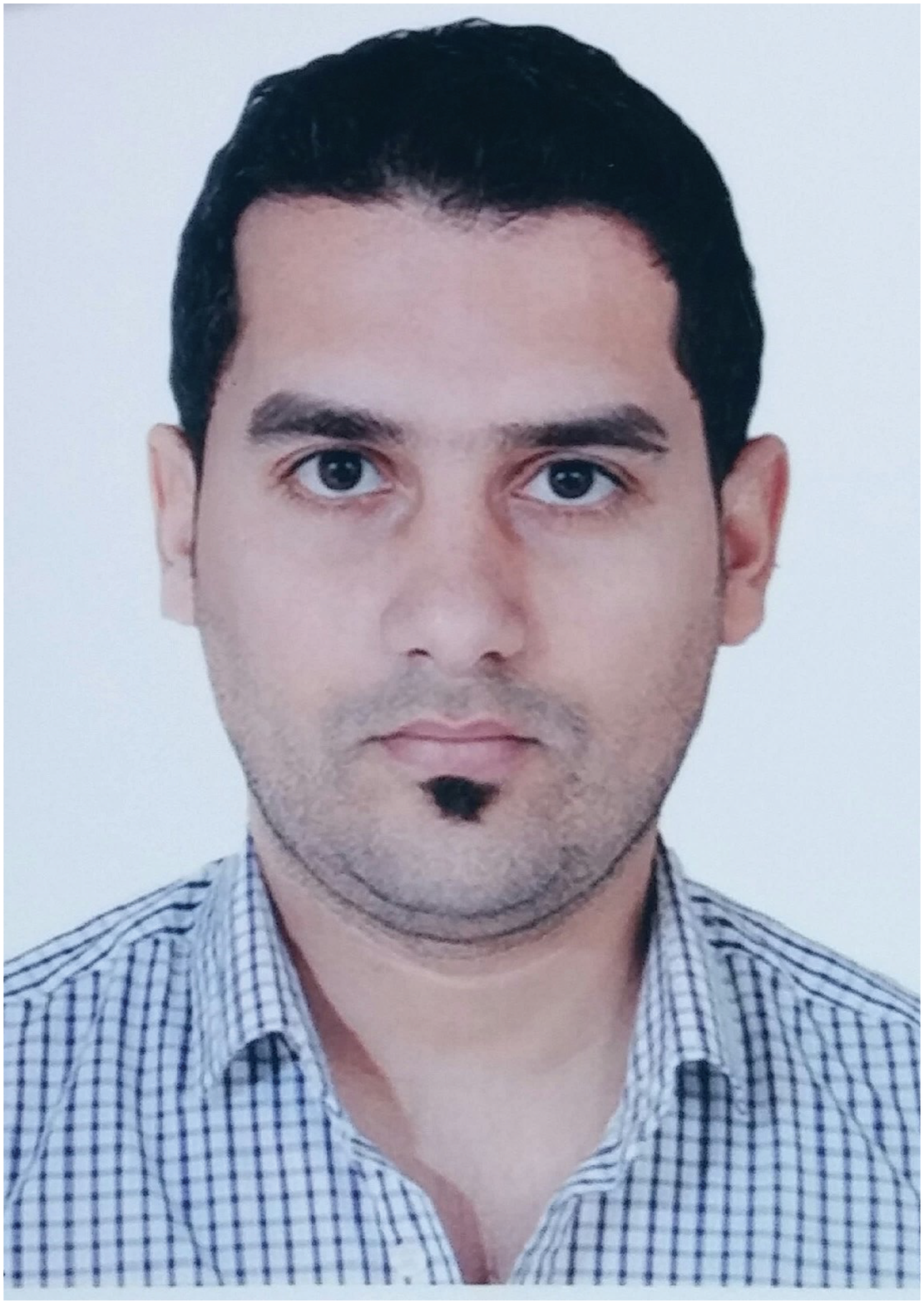}}]{Isam Saedi} (S'20) obtained the master’s degree in electrical power systems engineering from The University of Manchester, U.K., in 2015. He is currently in pursuit of his Ph.D. degree at the Department of Electrical and Electronic Engineering, The University of Melbourne, Australia. His research focuses on the assessment of integrated electricity-gas-hydrogen systems in the presence of different coupling technologies and scenarios for different sectors.
	\end{IEEEbiography}
	
	\begin{IEEEbiography}[{\includegraphics[width=1in,height=1.25in,clip,keepaspectratio]{./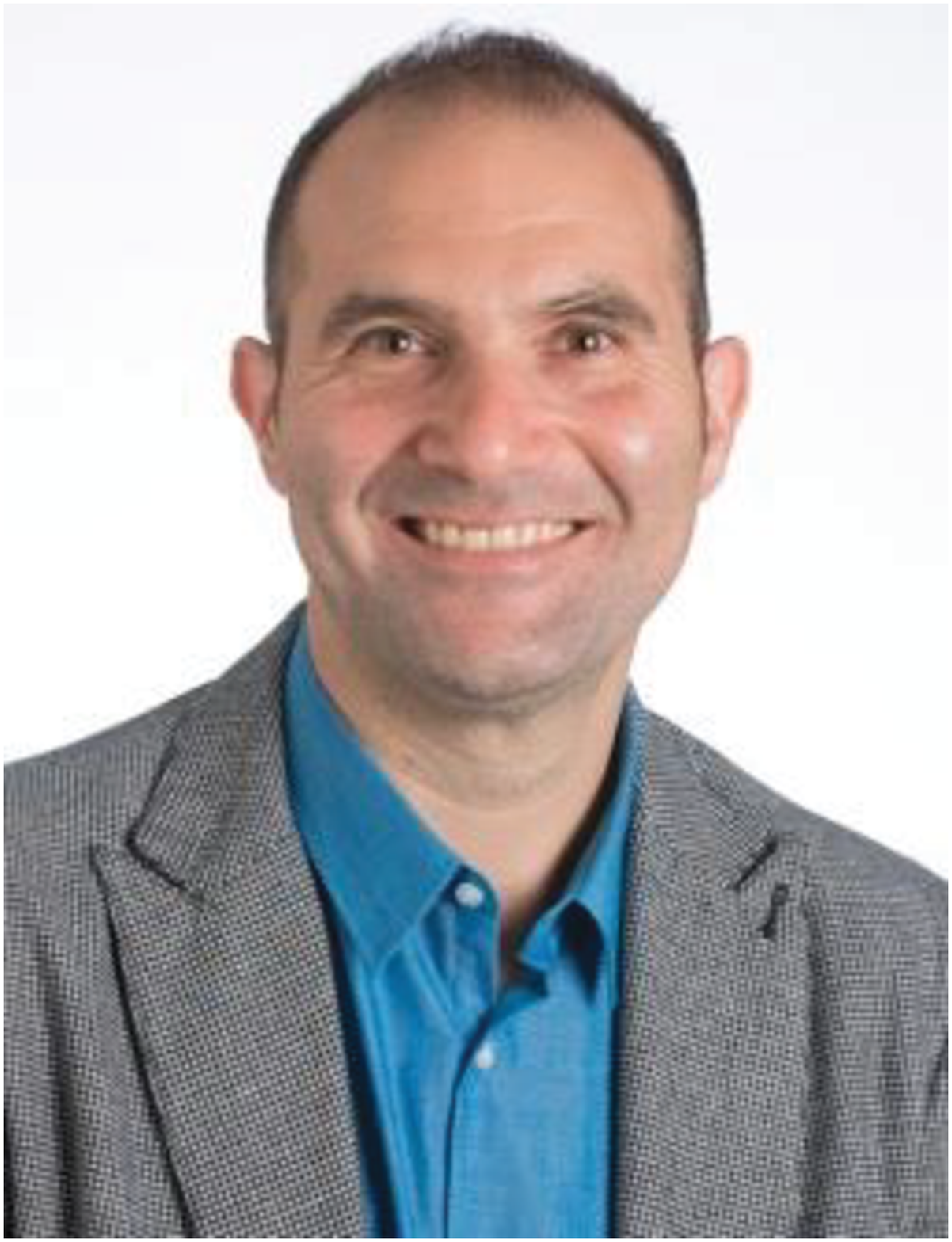}}]{Pierluigi Mancarella}
		(SM) is Chair Professor of Electrical Power Systems at The University of Melbourne, Melbourne, Australia, and Professor of Smart Energy Systems at The University of Manchester, Manchester, UK. His research interests include techno-economic modelling of integrated multi-energy systems; security, reliability and resilience of future networks; and energy infrastructure planning under uncertainty. Pierluigi is an Editor of the IEEE Transactions on Power Systems, IEEE Transactions on Smart Grid, and IEEE Systems Journal, and an IEEE Power and Energy Society Distinguished Lecturer.
	\end{IEEEbiography}

\end{document}